\newtheorem{theorem}{Theorem}[section]
\newtheorem{corollary}[theorem]{Corollary}
\newtheorem{lemma}[theorem]{Lemma}
\newtheorem{proposition}[theorem]{Proposition}
\theoremstyle{definition}
\newtheorem{definition}[theorem]{Definition}
\newtheorem{example}[theorem]{Example}
\newtheorem{remark}[theorem]{Remark}
\newtheorem*{ack}{Acknowledgment}
\sloppy \setcounter{tocdepth}{1}
\numberwithin{equation}{section}
\newcommand{\abs}[1]{\left|#1\right|}
\newcommand{\set}[1]{\left\{#1\right\}}
\newcommand{\ip}[1]{\left<#1\right>}
\renewcommand{\b}[1]{\mathbf{#1}}   
\newcommand{\dd}[2]{\frac{\partial #1}{\partial #2}}
\newcommand{\A}{\mathcal{A}}
\newcommand{\der}{{\rm Der}}
\newcommand{\derA}{\der(\mathcal{A})}
\newcommand{\OSigma}{{\overline{\Sigma}}}  
\newcommand{\omsheaf}{{\widetilde{\Omega}}}
\newcommand{\C}{\mathbb{C}}
\newcommand{\Z}{\mathbb{Z}}
\renewcommand{\P}{{\mathbb P}}
\renewcommand{\d}{{\rm d}}
\newcommand{\la}{{\lambda }}
\newcommand{\LL}{{\mathcal L}}
\newcommand{\SL}{{\mathfrak{sl}_2}}
\newcommand{\Imer}{{I_{\rm mer}}}  
\newcommand{\Ilog}{I}              
\newcommand{\IQ}{{I'}}             
\newcommand{\m}{{\mathfrak m}}     
\newcommand{\p}{{\mathfrak p}}     
\newcommand{\kk}{{C}}              
\def\dot{\mathchar"013A}  
\newcommand{\hdot}{{\hskip-0.05em\raise1pt\hbox to0.35em{\huge $\dot$}}}
\DeclareMathOperator{\tdeg}{tdeg}
\DeclareMathOperator{\tor}{Tor}
\DeclareMathOperator{\Ext}{Ext}
\DeclareMathOperator{\Hom}{Hom}
\DeclareMathOperator{\rad}{rad}
\begin{document}

\title{Critical points and resonance of hyperplane arrangements}

\author[D. Cohen]{D. Cohen$^1$}
\address{Department of Mathematics, Louisiana State University,
Baton Rouge, LA 70803, USA}
\email{\href{mailto:cohen@math.lsu.edu}{cohen@math.lsu.edu}}
\urladdr{\href{http://www.math.lsu.edu/~cohen/}
{www.math.lsu.edu/\char'176cohen}}
\thanks{{$^1$}Partially supported 
by National Security Agency grant H98230-05-1-0055}

\author[G. Denham]{G. Denham$^2$}
\address{Department of Mathematics, University of Western Ontario\\
London, ON  N6A 5B7, Canada}
\urladdr{\href{http://www.math.uwo.ca/~gdenham}%
{www.math.uwo.ca/\char'176gdenham}}
\thanks{{$^2$}Partially supported by a grant from NSERC of Canada}

\author[M. Falk]{M. Falk}
\address{Department of Mathematics and Statistics, 
Northern Arizona University, Flagstaff, AZ 86011, USA}
\email{\href{mailto:michael.falk@nau.edu}{michael.falk@nau.edu}}
\urladdr{\href{http://www.cefns.nau.edu/~falk/}%
{www.cefns.nau.edu/\char'176falk}}

\author[A. Varchenko]{A. Varchenko$^4$}
\address{Department of Mathematics, University of
North Carolina at Chapel Hill, Chapel Hill, NC 27599, USA}
\email{\href{mailto:anv@email.unc.edu}{anv@email.unc.edu}}
\urladdr{\href{http://www.math.unc.edu/Faculty/av/}
{www.math.unc.edu/Faculty/av/}}
\thanks{{$^4$}Partially supported by NSF grant DMS-0555327}

\subjclass[2000]{Primary
32S22,  
Secondary
55N25,  
52C35.  
}

\keywords{hyperplane arrangement, master function, resonant weights, 
critical set}

\begin{abstract}
If $\Phi_\la$ is a master function corresponding to a hyperplane arrangement 
$\A$ and a collection of weights $\la$, we investigate the relationship 
between the critical set of $\Phi_\la$, the variety defined by the vanishing 
of the one-form $\omega_\la=\d \log \Phi_\la$, and the resonance of $\la$.  
For arrangements satisfying certain conditions, we show that if $\la$ is 
resonant in dimension $p$, then the critical set of $\Phi_\la$ has codimension 
at most $p$.  These include all free arrangements and all rank $3$ arrangements.
\end{abstract}

\maketitle

\date{\today}

\section{Introduction} \label{sec:intro}
Let $\A=\set{H_1,\dots,H_n}$ be an arrangement of hyperplanes in
$V=\C^\ell$, 
with complement $M=M(\A)=V\setminus\bigcup_{j=1}^n H_j$.  
Fix coordinates $\b{x}=(x_1,
\dots,x_\ell)$ on $V$, and for each hyperplane $H_j$ 
of $\A$, let $f_j$ be a linear polynomial for which $H_j=
\{\b{x}\mid f_j(\b{x})=0\}$. 
A collection $\lambda=
(\la_1,\dots,\la_n)\in \C^n$ of complex weights determines a
\emph{master function}
\begin{equation} \label{eq:master}
\Phi_\la = \prod_{j=1}^n f_j^{\la_j},
\end{equation}
a multi-valued holomorphic function with zeros and poles on the variety 
$\bigcup_{j=1}^n H_j$ defined by $\A$.  The master function $\Phi_\la$ 
determines a one-form 
\begin{equation} \label{eq:omega}
\omega_\la = \d \log \Phi_\la = \sum_{j=1}^n \la_j \frac{\d f_j}{f_j}
\end{equation}
in the Orlik-Solomon algebra $A(\A) \cong H^\hdot(M;\C)$, a quotient of an 
exterior algebra. 

Two focal points in the recent study of arrangements are the cohomology 
$H^\hdot(A(\A),\omega_\la)$ of the Orlik-Solomon algebra with differential 
given by multiplication by $\omega_\la$, and the critical set of the 
master function $\Phi_\la$, the variety $V(\omega_\la) \subset M$
defined by the vanishing of the one-form $\omega_\la$.  We shall
denote the latter by $\Sigma_\lambda$.  The cohomology 
$H^\hdot(A(\A),\omega_\la)$ arises in the study of local systems on $M$.  
Under certain conditions on the weights $\la$, the inclusion of 
$(A(\A),\omega_\la)$ in the twisted de Rham complex 
$(\Omega^\hdot(*\A), \d+\omega_\la)$ induces an isomorphism 
$H^\hdot(A(\A),\omega_\la) \cong H^\hdot(M;\LL_\la)$, where $\LL_\la$ is the 
complex, rank one local system on $M$ with monodromy 
$\exp(-2\pi \sqrt{-1}\,\la_j)$ about the hyperplane $H_j$.  
See \cite{OTmsj} for discussion of these results and applications to 
hypergeometric integrals.  The critical set of the master function is 
also of interest in mathematical physics.  For instance, for certain 
arrangements, the critical equations of the $\Phi_\la$ coincide with 
the Bethe ansatz equations for the Gaudin model associated with a 
complex simple Lie algebra $\mathfrak g$, see~\cite{RV,Va06}.

Assume that $\A$ contains $\ell$ linearly independent hyperplanes, and 
note that $M$ has the homotopy type of an $\ell$-dimensional cell
complex.  
For generic weights $\la$, the cohomology groups $H^q(A(\A),\omega_\la)$
vanish in all dimensions except possibly $q=\ell$, and 
$\dim H^\ell(A(\A),\omega_\la) = \abs{\chi(M)}$, where $\chi(M)$ is the 
Euler characteristic of $M$, see Yuzvinsky \cite{Yuz}.  
Those weights $\la$ for which the cohomology
does not vanish (in dimension $q\neq \ell$) are said to be resonant, and
comprise the resonance varieties
\[
R^{q}_p(A(\A))=\{\la\in\C^{n} \mid 
\dim H^{q}(A(\A),\omega_{\la})\ge p\},\quad 0<q<\ell,\ 0<p.  
\] 

In \cite{Varchenko}, Varchenko conjectured that, for generic weights
$\la$, 
the master function $\Phi_\la$ has $|\chi(M)|$ nondegenerate critical
points 
in $M$, and proved this result in the case where the hyperplanes of $\A$
are defined by real linear polynomials $f_j$.  Varchenko's conjecture 
was established for an arbitrary arrangement $\A$ by Orlik and Terao 
\cite{ot95}.  See Damon~\cite{Damon} and Silvotti \cite{Silvotti} for 
generalizations.
For generic, or nonresonant, weights $\la$, the critical set of
$\Phi_\la$ 
was used to construct a basis for the local system homology group 
$H_\ell(M;\LL_\la)$ by Orlik and Silvotti \cite{OrlikSilvotti}.

Let $z=(z_1,\dots,z_n)$ be an $n$-tuple of distinct complex numbers,
$z_i \neq z_j$ for $i \neq j$, $m=(m_1,\dots,m_n)$ an
$n$-tuple of nonnegative integers, and $\kappa \in \C^*$ generic.  The 
master function
\begin{equation} \label{eqn:disc master}
\Phi_{\ell,n}= \prod_{i=1}^\ell \prod_{j=1}^n 
(x_i - z_j)^{-m_j/\kappa} \prod_{1\le p < q \le \ell}(x_p -
x_q)^{2/\kappa}
\end{equation}
defines a local system on the complement of the 
Schechtman-Varchenko discriminantal arrangement $\A_{\ell,n}$
corresponding 
to the $\SL$ KZ differential equations, see~\cite{SV}.  The critical set
of $\Phi_{\ell,n}$ 
was determined by Scherbak and Varchenko \cite{ScherbakVarchenko}.  
Let $\abs{m}=\sum_{j=1}^n m_j$.  If $m$ satisfies $0\le
|m|-\ell+1<\ell$, 
then for generic $z$, the critical set of $\Phi_{\ell,n}$ consists of a 
certain number, say $k$, of curves in $V$, see 
\cite[Thm. 1]{ScherbakVarchenko}. Let $\la=(\ldots, -m_j/\kappa,
\ldots,2/\kappa,\ldots)$ denote the associated collection of weights.  
The Orlik-Solomon cohomology $H^\hdot(A(\A_{\ell,n}), \omega_\la)$ was 
subsequently studied by Cohen and Varchenko \cite{CV}.  Under the same 
conditions on $m$, this cohomology is nontrivial in codimension one, 
$H^{\ell-1}(A(\A_{\ell,n}), \omega_\la) \neq 0$.  Furthermore, the 
dimension of the subspace of skew-symmetric cohomology classes 
under the natural action of the symmetric group $S_\ell$ is equal to
$k$, 
the number of components of the critical set, see \cite[Thm. 1.1]{CV}.
Mukhin and Varchenko~\cite{MV,MV2} also describe interesting multidimensional
critical sets for master functions generalizing 
those of \eqref{eqn:disc master} to other root systems.

These results suggest a relationship between the critical set $\Sigma_\lambda$
and the resonance, or nonvanishing, of $H^\hdot(A(\A),\omega_\la)$.  The 
main purpose of this note is to establish such a relationship for
tame arrangements, defined below.
Our main result, Theorem~\ref{thm:res2crit},
insures that if $H^p(A(\A),\omega_\la) \neq 0$, then the
codimension of the critical set of the master function $\Phi_\la$ is at 
most $p$, as long as one of the following conditions holds: $\A$ is
free; $\A$ has rank
$3$; $\A$ is tame and $p\leq 2$.   

Some of  the results presented here were announced in \cite{Den07} and \cite{Fal07}. 
These reports inspired Dimca \cite{Dim08} to find other conditions 
which insure that the codimension of $Z(\omega_\la)$ is at most $p$, where $Z(\omega_\la)$ 
is the zero set of $\omega_\la$ in a good compactification of $M$ and it is additionally assumed that 
$H^j(A(\A),\omega_\la)=0$ for $j<p$.

Our main result is proven in two steps.  First, in \S\ref{sec:geom},
we develop some properties of a variety $\Sigma(\A)\subseteq M\times
\C^n$ that parameterizes all critical sets for a fixed arrangement $\A$.  
Its closure in affine space $\C^\ell\times\C^n$, denoted by $\OSigma(\A)$,
can be described in terms of logarithmic derivations; we show that
the variety is a complete intersection if and only if $\A$ is free.
We also find that $\Sigma(\A)$ is arithmetically 
Cohen-Macaulay if $\A$ is tame, although
we do not know if the converse holds or not.

Let $R$ be the coordinate ring of $V=\C^\ell$, and identify $R$ with the 
polynomial ring $\C[x_1,\dots,x_\ell]$.  
Assume that $\A$ is a central arrangement, so 
that each hyperplane of $\A$ passes through the origin in $V$.  
We will see (\S\ref{subsec:affine}) that this assumption causes no
loss of generality.
The polynomials $f_j$ defining the hyperplanes of $\A$ are then
linear forms, and a defining polynomial $Q = \prod_{j=1}^n f_j$ of $\A$ 
is homogeneous of degree $n=\abs{\A}$.  For any $k$-algebra $T$, 
let $\der_k(T)$ denote the
$T$-module of $k$-linear derivations on $T$.  Let $\derA$ denote the
module of logarithmic derivations on $M(\A)$:
\begin{equation}\label{eq:defDer}
\derA=\set{\theta\in\der_\C(R)\colon\theta(Q)\in(Q)}.
\end{equation}
The arrangement $\A$ is said to be free if the module $\derA$
is a free $R$-module.  

The notion of a tame arrangement first arose in \cite{ot95b} and subsequently
appeared in \cite{YT95,WiensYuz}.  Tame arrangements include generic 
arrangements, free arrangements (hence discriminental arrangements), and all 
arrangements of dimension less than $4$.  The precise definition appears in 
the next section: see Definition~\ref{def:tame}.

In Section \S\ref{sec:koszul}, we use a complex of logarithmic differential
forms to resolve the defining ideal of $\OSigma(\A)$.  For free arrangements,
this is simply a Koszul complex.  The general case is more awkward, since
the resolution is not in general free, and we require the tame hypothesis
to show that it is exact.  Nevertheless, this provides a link relating
the codimension of a critical set and nonvanishing of the cohomology 
$H^\hdot(\Omega^\hdot(\A),\omega_\lambda)$ of the complex of 
logarithmic forms with poles along $\A$ 
(Theorem~\ref{th:two} and corollaries).

The second step is to show that $H^p(\Omega^\hdot(\A),\omega_\lambda)\neq0$
implies that $H^p(A(\A),\omega_\lambda)\neq0$, which we do in 
\S\ref{sec:res2crit}.
The argument combines a result of Wiens and Yuzvinsky~\cite{WiensYuz}
(which requires the ``tame'' hypothesis again) 
with a spectral sequence due to Farber \cite{farber}.  In \S\ref{sec:examples}, we
give some examples which show, in particular, that the reverse implication
does not hold in general.

\section{Geometry of the critical set}\label{sec:geom}
In this section, we introduce and compare several slightly different 
algebraic descriptions of critical sets of master functions.
In particular, we recall that for each arrangement $\A$ of $n$ hyperplanes, 
there
exists a manifold of dimension $n$ that parameterizes the critical sets
of all master functions on $\A$.  

\subsection{Central and irreducible arrangements}\label{subsec:affine}
We will want to make two reductions to the class of arrangements considered
in the arguments that follow.  First, it is sufficient to consider
arrangements which are central.  For this, 
if $\A=\set{H_j}_{j=1}^n$ is a non-central arrangement in $\C^{\ell-1}$
with master function $\Phi_\lambda$, we homogenize
the equations $\set{f_j}$ by adding a new variable $x_0$, and introduce a new
hyperplane $H_0$ defined by $f_0=x_0$ with weight $\lambda_0=-\sum_{i=1}^n\lambda_i$. 
This yields a central arrangement $\A'$ in $\C^\ell$ (the cone of $\A$), with weights 
$\la'=(\la_0,\la_1,\dots,\la_n)$, and corresponding master function $\Phi_{\la'}$.  
If $\Sigma_{\la'}$ is the critical set of $\Phi_{\la'}$, then $\Sigma_\lambda$ can be identified with $\P\Sigma_{\lambda'}$ by
restricting to the affine chart of $\P^{\ell-1}$ with $x_0\neq0$.
Accordingly, the codimensions of $\Sigma_\lambda$ in $\C^{\ell-1}$,
of $\Sigma_{\lambda'}$ in $\C^\ell$, and of $\P\Sigma_{\lambda'}$ in
$\P^{\ell-1}$ are all equal.

On the other hand, the Orlik-Solomon complexes for $\A'$ and $\A$ are
related by
\[
(A(\A'),\omega_{\lambda'})\cong 
(A(\A),\omega_\lambda)\otimes_\C(\C\stackrel{0}\rightarrow\C).
\]
Then the least $p$ for which $H^p(A(\A),\omega_\lambda)\neq0$ is the
same as that for which $H^p(A(\A'),\omega_{\lambda'})\neq0$.

Second, recall that 
an arrangement $\A$ in $V$ is said to be reducible if there exist subspaces
$V_1$ and $V_2$ with $V\cong V_1\oplus V_2$ and a nontrivial partition
$P_1\sqcup P_2=[n]$ for which $f_i\in V_j^*$ if and only if $i\in P_j$.
If $\A$ is reducible, write $\A=\A_1\oplus \A_2$, where $\A_j$
is the arrangement in $V_j$ of hyperplanes indexed by $P_j$.   Otherwise,
$\A$ is said to be irreducible.
\subsection{Complexes of forms}\label{subsec:forms}
Fix a central
arrangement $\A$ of $n$ hyperplanes in $V=\C^\ell$, with defining 
polynomial $Q$.  We assume that $\A$ is essential, that is, contains a 
subarrangement of $\ell$ linearly independent hyperplanes.  Recall that $R$ 
is the coordinate ring of $V$.  
The localization $R_Q$ is the coordinate ring of the hyperplane complement 
$M$.

Let $\kk=\kk(\A)=\C[a_1,\ldots,a_n]$, where $a_1,\dots,a_n$ will be 
interpreted as 
weights on the hyperplanes, and let $S=\kk\otimes R$. 
For each $p$ and $k$-algebra $T$, let $\Omega^p_{T/k}$ 
be the $T$-module of $k$-valued K\"ahler $p$-forms over $T$, 
so that $\Omega^p_{R/\C}$ and $\Omega^p_{S/\kk}$ are $\C$- and $\kk$-valued
polynomial $p$-forms on $V$, respectively.  For $T=R,S$, 
let $\Omega^p_{T/k}(*\A)=\Omega^p_{T_Q/k}$, the $T_Q$-module of
$k$-valued, rational $p$-forms with poles on the hyperplanes $\A$.
Write $\Omega^p(*\A)=\Omega^p_{R/\C}(*\A)$ for short.
Similarly, the $T$-module $\Omega^p_{T/k}(\A)$ of 
logarithmic $p$-forms with poles along $\A$ is defined by
\begin{equation}\label{eq:logforms}
\Omega^p_{T/k}(\A) = \set{\eta \in \Omega^p_{T/k}(*\A) \colon Q\eta \in  
\Omega^p_{T/k}\ \text{and}\ 
Q \d \eta \in  \Omega^{p+1}_{T/k}},
\end{equation}
and again write $\Omega^p(\A)=\Omega^p_{R/\C}(\A)$.
In particular, $\Omega^p_{T/k}(\A)=0$ if $p<0$ or $p>\ell$.  

For any $\eta\in\Omega^k(\A)$,
by definition, $Q \eta \in \Omega^k_{R/\C}$.  If $\eta$ is homogeneous, 
we say its total degree is $m$ and write $\tdeg(\eta)=m$ if
\[ 
Q\eta=\sum f_I \d x_I \ \text{and}\ m = k+\deg f_I - \deg Q = k +
\deg f_I - n. 
\] 
Let $\Omega^\hdot(\A)_m = \set{\eta \in \Omega^\hdot(\A) \mid
\tdeg(\eta)=m}$.  

For a $\Z$-graded module $N$ and integer $r$,
define the shift $N(r)$ by $N(r)_q=N_{r+q}$, for
$q\in \Z$.  Then $R(n-\ell)\cong\Omega^\ell(\A)$ 
via the map $1\mapsto Q^{-1}\d x_1\wedge\cdots\wedge \d x_\ell$.  

We recall that
$\Omega^1(\A)$ is the $R$-dual of $\derA$: see \cite[4.75]{ot}.  Moreover,
$\A$ is free if and only if $\Omega^1(\A)$ is a free $R$-module.  The
logarithmic forms themselves are self-dual: 
\begin{lemma} For each $p$, $0\le p \le \ell$, we have
$\Hom_R(\Omega^p(\A),R)\cong
\Omega^{\ell-p}(\A)(\ell-n)$.
\end{lemma}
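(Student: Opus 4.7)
The plan is to construct the expected isomorphism explicitly as wedge-product followed by the identification $\Omega^\ell(\A)\cong R(n-\ell)$, and then verify bijectivity by invoking reflexivity and reducing to a codimension-one check.

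First I would define the pairing
\[
\Omega^p(\A)\otimes_R \Omega^{\ell-p}(\A)\longrightarrow\Omega^\ell(\A),\qquad
\eta\otimes\zeta\longmapsto \eta\wedge\zeta.
\]
A priori, $Q^2\eta\wedge\zeta$ is a polynomial $\ell$-form, so $\eta\wedge\zeta$ lies in $\Omega^\ell(*\A)$ with poles of order at most $2$. One must check these poles are actually simple, i.e.\ that $Q\eta\wedge\zeta$ is already polynomial. This is a local calculation: at a generic point of each $H_j$, choose coordinates so that $f_j=x_1$ and expand each logarithmic form as an $R$-linear combination of $\d x_1/x_1\wedge\d x_{I'}$ and $\d x_I$ with $1\notin I,I'$; the wedge of two such terms contains at most one factor of $\d x_1/x_1$. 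This is Saito's local observation that $\Omega^p(\log D)\wedge\Omega^q(\log D)\subseteq\Omega^{p+q}(\log D)$ for a reduced divisor $D$. Composing with the stated isomorphism $\Omega^\ell(\A)\cong R(n-\ell)$ and accounting for the grading shift then yields a degree-preserving $R$-linear map
\[
\psi:\Omega^{\ell-p}(\A)(\ell-n)\longrightarrow\Hom_R(\Omega^p(\A),R).
\]

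Next I would argue that both source and target are reflexive $R$-modules: the target because $R$-duals of finitely generated modules are automatically reflexive, and the source because $\Omega^q(\A)$ is reflexive for every $q$ (this is Saito's theorem for logarithmic forms along any reduced divisor, and is part of the standard theory of arrangements). Since $R$ is regular and $\psi$ is a homomorphism of reflexive coherent sheaves on $\Spec R$, it is an isomorphism if and only if its localization $\psi_\p$ is an isomorphism at every height-one prime $\p\subset R$.

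The last step is the codimension-one verification. At a height-one prime $\p$ not containing $Q$, both modules localize to the corresponding localizations of $\Omega^p_{R/\C}$ and $\Omega^{\ell-p}_{R/\C}$, and $\psi_\p$ becomes the classical, perfect wedge pairing on forms over a smooth variety. The remaining case is $\p=(f_j)$ for a single $j$; after a linear change of coordinates we may take $f_j=x_1$. Then $\Omega^p(\A)_\p$ is free on the basis $\{\d x_I : 1\notin I\}\cup\{(\d x_1/x_1)\wedge \d x_{I'} : 1\notin I'\}$ of size $\binom{\ell-1}{p}+\binom{\ell-1}{p-1}=\binom{\ell}{p}$, with an analogous basis on $\Omega^{\ell-p}(\A)_\p$, and a direct computation shows that the wedge pairing is perfect (the dual basis is constructed by Hodge-style complementation).

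The main obstacle is the reflexivity input, which is not self-evident but is classical (Saito); once it is in hand, the remaining local computation at a smooth point of each hyperplane is entirely routine, and the generic (non-arrangement) points reduce to textbook linear algebra on the exterior algebra.
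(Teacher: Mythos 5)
Your argument is correct and follows essentially the same route as the paper, which also defines the pairing by exterior multiplication into $\Omega^\ell(\A)\cong R(n-\ell)$ (citing \cite[4.79]{ot}) and then asserts nondegeneracy ``by comparing with the regular forms.'' Your reflexivity-plus-codimension-one verification is a complete and accurate filling-in of that last step, which the paper leaves as routine.
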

\begin{proof}
Exterior multiplication gives a map $\Omega^p(\A)\otimes_R\Omega^{\ell-p}(\A)\to
R(n-\ell)$ from \cite[4.79]{ot}.  By comparing with the regular forms, it
is straightforward to check this is a nondegenerate pairing.
\end{proof}

The following turns out to be an interesting weakening of freeness.
\begin{definition}\label{def:tame}
Say that an arrangement $\A$ in $V$ is tame if the projective dimension
of each module of logarithmic forms is bounded by cohomological degree:
that is, 
$
{\rm pd}_R \Omega^p(\A)\leq p
$
for all $p$ with $0\leq p\leq \ell$.
\end{definition}

We will make use of several choices of differential on the graded vector 
spaces $\Omega^\hdot(*\A)$ and $\Omega^\hdot(\A)$.  First,
the exterior derivative $\d\colon \Omega^p(*\A)\to\Omega^{p+1}(*\A)$
restricts to the logarithmic forms $\Omega^\hdot(\A)$, making both 
$(\Omega^\hdot(*\A),\d)$ and $(\Omega^\hdot(\A),\d)$ ($\C$-)cochain complexes.
Also, for $(T,k)=(R,\C)$ or $(S,\kk)$, for any $\omega\in\Omega^1_{T/k}(\A)$, 
we shall denote
by $(\Omega^\hdot_{T/k}(*\A),\omega)$ and $(\Omega^\hdot_{T/k}(\A),\omega)$ 
the cochain
complexes of $T_Q$- and $T$-modules, respectively, obtained by
using (left)-multiplication by $\omega$ as a differential.
Last, for $t\in\C$, let $\nabla_t=\d+t\omega$, and $\nabla=\nabla_1$.  
As long as $\d\omega=0$, this gives a third choice of differential.

Observe that the log complex decomposes into complexes of
finite dimensional vector spaces 
\[ 
(\Omega^\hdot(\A),\omega_\lambda) =
\bigoplus_{m\in\Z} (\Omega^\hdot(\A)_m,\omega_\lambda), \ \text{resp.,}\
(\Omega^\hdot(\A),\nabla) = \bigoplus_{m\in\Z} (\Omega^\hdot(\A)_m,\nabla). 
\]

\subsection{Localizations}\label{ss:local}
If $\p$ is a prime ideal of $R$, following \cite[4.6]{ot}, let 
$X(\p)$ denote the subspace in $L(\A)$ of least dimension containing
$V(\p)$.  Then $\Omega^p(\A)_\p=\Omega^p(\A_X)_\p$ where
$X=X(\p)$: in particular, the localization $\Omega^1(\A)_\p$ is a 
free $R_\p$-module if and only if $\A_X$ is a free arrangement.

Recall a central arrangement
$\A$ is said to be {\em locally free} if $\A_X$ is free for all
$X\neq\set{0}$: see \cite{MuSc01}.  
In this case, $\Omega^1(\A)_\p$ is free for all
prime ideals not equal to the homogeneous maximal ideal $R_+$.  Since
all rank $2$ arrangements are free, the locus on which $\Omega^1(\A)_\p$
is not a free module has codimension at least $3$.

\subsection{The meromorphic ideal}
Recall that our goal is to understand
the solutions to the $\ell$ equations given by $\omega_\lambda=0$
as $\lambda\in\C^n$ varies, where the $1$-form
$\omega_\lambda$ is defined in \eqref{eq:omega}.  It is natural, then,
to consider the ``universal'' $1$-form.

\begin{definition}\label{def:Imero}
Let
\begin{equation}\label{eq:omega_a1}
\omega_\b{a}=\sum_{i=1}^n a_i \frac{\d f_i}{f_i} \in \Omega^1_{S/\kk}(\A),
\end{equation}
and let $\Imer$ be the ideal of $S_Q$ defined by the $\ell$
equations $\omega_\b{a}=0$.  We will call $\Imer$ the {\em meromorphic
ideal} of critical sets for $\A$.  
\end{definition}

In coordinates, 
if the hyperplanes of $\A$ are defined by equations
$f_i=\sum_{j=1}^\ell c_{ij}x_j$ for $1\leq i\leq n$,
then
\begin{equation}\label{def:omega_a}
\omega_\b{a}=\sum_{i,j} \frac{a_ic_{ij}}{f_i}\d x_j,
\end{equation}
and the meromorphic ideal $\Imer$ is generated by the elements
$\set{d_j\colon 1\leq j\leq \ell}$, where $d_j=\sum_{i}a_ic_{ij}/f_i$.
Thus, $\Imer$ is the image of the 
duality pairing $\ip{\der_{\kk}(S),\omega_{\b a}}$ in $S_Q$.

For $\omega_\lambda\in A^1(\A)\cong\C^n$, 
the degree-$1$ part of the Orlik-Solomon algebra, let
\begin{equation}\label{eq:Sigma}
\Sigma_\lambda=V(\omega_\lambda)\subseteq M
\end{equation}
denote the critical set of the master function $\Phi_\lambda$.  Further
let
\begin{equation}\label{eq:defSigma}
\Sigma=\Sigma(\A)=\set{(x,\omega)\in M\times A^1\colon\omega_{\b a}(x)=0},
\end{equation}
and note that $\Sigma\cong V(\Imer)$.  Denote by $\pi^*_1$, 
$\pi^*_2$ the two projections
\begin{equation}\label{eq:defpi}
\xymatrix{
V & V\times\C^n\ar[l]_{\pi^*_1}\ar[r]^{\pi^*_2} & \C^n
}
\end{equation}
induced by the inclusions of coordinate rings
$$
\xymatrix{
R\ar[r]^{\pi_1} & S & \kk\ar[l]_{\pi_2}.
}
$$

\begin{proposition}[Proposition 4.1, \cite{ot95}]\label{prop:univ}
If $\A$ is an arrangement of rank $\ell$, then 
$\Sigma$ is a codimension-$\ell$ complex manifold embedded in
$V\times\C^n$.
\end{proposition}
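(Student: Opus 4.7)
The plan is to exhibit $\Sigma$ as the total space of a rank-$(n-\ell)$ linear subbundle of the trivial bundle $M\times\C^n\to M$, which immediately gives both smoothness and the codimension count.

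First, I would rewrite the $\ell$ defining equations in a form that makes the linear structure over $M$ transparent. Using the coordinates from \eqref{def:omega_a}, the ideal $\Imer$ is generated by $d_j=\sum_{i=1}^n \frac{c_{ij}}{f_i}a_i$ for $j=1,\dots,\ell$. For any fixed $x\in M$, these are $\ell$ homogeneous \emph{linear} equations in the variables $a_1,\dots,a_n$, with coefficient matrix
\[
B(x) = \Bigl(\tfrac{c_{ij}}{f_i(x)}\Bigr)_{1\le i\le n,\,1\le j\le \ell} = D(x)\cdot C,
\]
where $C=(c_{ij})$ is the $n\times\ell$ coefficient matrix of the linear forms $f_i$ and $D(x)=\mathrm{diag}(1/f_1(x),\dots,1/f_n(x))$.

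Second, I would check that $B(x)$ has full rank $\ell$ for every $x\in M$. Since $\A$ has rank $\ell$, the matrix $C$ has rank $\ell$; since $f_i(x)\neq 0$ for all $i$ when $x\in M$, the diagonal matrix $D(x)$ is invertible. Hence $\mathrm{rank}\,B(x)=\ell$ throughout $M$, and the fiber of $\pi_1^*\colon\Sigma\to M$ over $x$ is a linear subspace of $\C^n$ of dimension exactly $n-\ell$, varying holomorphically in $x$.

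Third, I would conclude smoothness and the codimension statement in either of two equivalent ways. The fiberwise picture already shows that $\Sigma$ is a rank-$(n-\ell)$ holomorphic vector subbundle of the trivial bundle $M\times\C^n$, hence a complex manifold of dimension $n=(\ell+n)-\ell$, embedded in $V\times\C^n$ with codimension $\ell$. Alternatively, the Jacobian of $(d_1,\dots,d_\ell)$ with respect to the $a_i$-variables alone is exactly the transpose of $B(x)$, so at every point of $\Sigma$ the full Jacobian with respect to $(x,\mathbf{a})$ has rank $\ell$; the implicit function theorem then gives a codimension-$\ell$ submanifold.

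There is no real obstacle here beyond keeping the linear algebra straight: the key observation is simply that on $M$ the $\ell$ equations are linear in $\mathbf{a}$ with constant rank $\ell$, so the rank hypothesis on $\A$ together with $f_i(x)\neq 0$ on $M$ does all of the work.
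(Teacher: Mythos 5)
Your proof is correct and rests on essentially the same mechanism the paper uses (the paper itself cites \cite{ot95} for this statement and immediately refines it in Proposition~\ref{prop:univ2}): everything follows from the fact that over $M$ the $\ell$ equations are linear in $\mathbf{a}$ with coefficient matrix $D(x)C$ of constant rank $\ell$, because $C$ has rank $\ell$ and each $f_i(x)\neq 0$. The only difference is one of packaging: the paper writes down the explicit change of variables $a_i=\lambda_i f_i(x)$, giving a global trivialization $\Sigma\cong M\times W$ with $W=\ker(\lambda\mapsto\sum_i\lambda_i f_i)$, which additionally shows your subbundle is trivial, while your constant-rank/implicit-function-theorem argument delivers the smoothness and codimension claims of the proposition equally well.
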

More precisely, one has the following.  (See \cite[Theorem~4]{HKS05} for
a 
related result.)
\begin{proposition}\label{prop:univ2}
The restriction of the projection $\pi^*_1\colon \Sigma\to M$
gives $\Sigma$ the structure of a trivial vector bundle over 
$M$ of rank $n-\ell$.
\end{proposition}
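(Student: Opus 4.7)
The plan is to realize $\Sigma$ as the kernel of a surjective holomorphic morphism of trivial bundles over $M$ and then exhibit an explicit global trivialization. At each $x\in M$, the value $\omega_\lambda(x)\in T^*_xV\cong\C^\ell$ depends linearly on $\lambda$, giving a bundle morphism $\Phi\colon M\times\C^n\to M\times\C^\ell$. Under the identification $A^1\cong\C^n$ via the basis $\{\d f_j/f_j\}$, the kernel of $\Phi$ is precisely $\Sigma$. In coordinates, writing $\d f_i=\sum_k c_{ik}\,\d x_k$, the matrix of $\Phi_x$ has $(k,j)$ entry $c_{jk}/f_j(x)$, which is holomorphic on $M$.

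First I would verify that $\Phi_x$ is surjective for every $x\in M$. Since $\A$ is essential of rank $\ell$, there is a subset $J\subseteq\{1,\dots,n\}$ with $|J|=\ell$ such that $\{f_j\}_{j\in J}$ is linearly independent; the constant $1$-forms $\{\d f_j\}_{j\in J}$ then form a basis of $T^*_xV$, and rescaling each by the nonzero scalar $1/f_j(x)$ preserves this. Thus $\Phi_x$ has full rank $\ell$ for all $x\in M$, and its kernel $\Sigma$ is a holomorphic subbundle of $M\times\C^n$ of rank $n-\ell$.

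Next I would construct a global trivialization. Fix $J$ as above, and let $J'=\{1,\dots,n\}\setminus J$. Given $(x,\mu)\in M\times\C^{J'}$, the equations $\omega_\lambda(x)=0$ with $\lambda_{J'}=\mu$ form an $\ell\times\ell$ linear system in the unknowns $\lambda_J$, whose coefficient matrix factors as the product of the constant $\ell\times\ell$ matrix $(c_{jk})_{k,j\in J}$ (invertible by the choice of $J$) with $\mathrm{diag}(1/f_j(x))_{j\in J}$ (invertible throughout $M$). By Cramer's rule, $\lambda_J$ is a holomorphic function of $(x,\mu)$, and the resulting map $M\times\C^{n-\ell}\to\Sigma$, $(x,\mu)\mapsto(x,\lambda)$, inverse to the projection $\Sigma\to M\times\C^{J'}$, is the desired trivialization.

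The main subtlety lies in the second step: a holomorphic subbundle of a trivial bundle need not itself be trivial, so one cannot invoke generalities and must produce a global splitting. What makes the construction succeed here is that, once a linearly independent subset $\{f_j\}_{j\in J}$ is fixed, the defining equations of $\Sigma$ split cleanly into an invertible linear system for $\lambda_J$ in terms of the free parameters $\lambda_{J'}$, with all coefficients holomorphic on $M$.
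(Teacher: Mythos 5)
Your proof is correct, but your trivialization is different from the paper's. You split the index set into a basis $J$ and its complement $J'$, and solve the $\ell\times\ell$ system for $\lambda_J$ in terms of the free parameters $\lambda_{J'}$ by Cramer's rule; the key point that the coefficient matrix factors as a constant invertible matrix times $\mathrm{diag}(1/f_j(x))$ is right, and the resulting section $M\times\C^{J'}\to\Sigma$ is holomorphic and globally defined. The paper instead takes the fixed linear subspace $W=\{\lambda:\sum_i\lambda_i f_i=0\}$ (of dimension $n-\ell$ by essentiality, since vanishing of the linear form $\sum_i\lambda_i f_i$ is $\ell$ independent conditions) and maps $M\times W\to M\times\C^n$ by $(x,\lambda)\mapsto(x,(\lambda_i f_i(x))_i)$; because $\sum_i\lambda_i\,\d f_i=0$ for $\lambda\in W$, this lands in $\Sigma$, and since each $f_i(x)\neq0$ on $M$ the diagonal rescaling is invertible fibrewise, giving $\Sigma\cong M\times W$. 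The paper's version is coordinate-free (no choice of $J$) and, more importantly, the explicit section $s$ is reused in Proposition~\ref{prop:closure1} to compute the limit of $\Sigma$ over $x=0$ via the Jacobian of $s$; your version is a standard implicit-function-style splitting that proves the proposition equally well but would be less convenient for that later computation.
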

\begin{proof}
Let $W=\set{\lambda\in\C^n\colon \sum_{i=1}^n \lambda_if_i=0}$,
a codimension-$\ell$ subspace.  Now define a map
\begin{equation}\label{eq:section}
s\colon M\times W\to M\times\C^n
\end{equation}
by setting $\pi^*_1\circ 
s(x,\lambda)=x$ and $\pi^*_2\circ s(x,\lambda)=\sum_{i=1}^n\lambda_if_i(x)e_i$
for $1\leq i\leq n$, where $e_i$ denotes the $i$th coordinate vector in
$\C^n$.  Since $\sum_{i=1}^n \lambda_i\d f_i=0$ for
$\lambda\in W$, it follows from
\eqref{eq:omega} that the image of $s$ actually lies in $\Sigma$.
Since $f_i(x)\neq0$ for $x\in M$, the map $s$ is invertible:
\begin{equation}\label{eq:trivbundle}
\Sigma\cong M\times W.
\end{equation}
\end{proof}
So for each $x\in M$, the fibre ${\pi_1^*}^{-1}(x)$ is a $n-\ell$-dimensional
vector space $W$ of weights $\lambda$ for which $x\in \Sigma_\lambda$.
The fibres of the other projection, $\pi^*_2\colon 
\Sigma\to A^1$, 
are the critical sets: $\Sigma_\lambda={\pi_2^*}^{-1}(\lambda)$
for each $\lambda\in A^1$.  We can also see the limit behaviour of 
critical sets near the origin in $V$.  Let $\overline{\Sigma}$ denote
the closure of $\Sigma$ in $V\times \C^n$.
\begin{proposition}\label{prop:closure1}
If $\A$ is an irreducible arrangement, then
$$
\overline{\Sigma}\cap\big({\pi_1^*}^{-1}(0)\big)=\Big\{(0,\lambda)\in
V\times\C^n\colon
\sum_{i=1}^n \lambda_i=0\Big\}.
$$
\end{proposition}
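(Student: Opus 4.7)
My plan is to show both containments separately.  For the inclusion ``$\subseteq$'' I would use the Euler identity.  Let $E=\sum_j x_j\partial_{x_j}$ be the Euler vector field on $V$; since each $f_i$ is linear, $E(f_i)=f_i$, so
\[
\iota_E\omega_{\b{a}}=\sum_i a_i\,\frac{E(f_i)}{f_i}=\sum_i a_i.
\]
The form $\omega_{\b{a}}$ vanishes on $\Sigma$, so the constant function $\sum_i a_i$ must vanish on $\Sigma$ and hence, by continuity, on $\overline\Sigma$.  (Equivalently, this follows from Proposition~\ref{prop:univ2}, since $\nu\in W$ means exactly $\sum_i\nu_i f_i=0$.)

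For the reverse inclusion I would combine two observations.  First, $\Sigma$ is stable under the $\C^*$-action $(x,a)\mapsto(tx,a)$, because $\omega_{\b{a}}(tx)=t^{-1}\omega_{\b{a}}(x)$; letting $t\to 0$ places $(0,a)$ in $\overline\Sigma$ for every $a\in\pi_2^*(\Sigma)$.  Since $\overline\Sigma$ is closed, this gives $\{0\}\times\overline{\pi_2^*(\Sigma)}\subseteq\overline\Sigma\cap{\pi_1^*}^{-1}(0)$, so it suffices to show that $\pi_2^*(\Sigma)$ is Zariski dense in the hyperplane $\{\sum_i a_i=0\}\subset\C^n$.

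For that density, note that any $\lambda$ with $\sum\lambda_i=0$ and $\Sigma_\lambda\neq\emptyset$ automatically lies in $\pi_2^*(\Sigma)$, so it is enough to verify non-emptiness of $\Sigma_\lambda$ on a Zariski-open subset of the hyperplane.  When $\sum\lambda_i=0$, $\Phi_\lambda$ has total degree zero and descends to a master function on the projective complement $\P M$, which is itself the complement of a non-central (deconed) arrangement of rank $\ell-1$.  Orlik and Terao's theorem~\cite{ot95}, applied in that non-central setting, produces $|\chi(\P M)|$ isolated nondegenerate critical points for generic weights; when $\A$ is irreducible, $|\chi(\P M)|$ agrees with the beta invariant of $\A$ and is strictly positive.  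Hence $\Sigma_\lambda\neq\emptyset$ for generic $\lambda$ in the hyperplane, completing the argument.

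The main obstacle is precisely this density step, and it is the only place where the irreducibility hypothesis is used.  If $\A=\A_1\oplus\A_2$ were reducible, the critical set would factor as a product and would require the separate conditions $\sum_{i\in P_j}\lambda_i=0$ for each irreducible component, so $\Sigma_\lambda$ would be empty for generic $\lambda$ constrained only by the single equation $\sum_i\lambda_i=0$.
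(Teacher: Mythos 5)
Your proof is correct, and it takes a genuinely different route from the paper's. For the inclusion $\subseteq$ the two arguments are really the same observation in different clothing: contracting $\omega_{\b{a}}$ against the Euler field is equivalent to noting that the second coordinate of the trivializing section $s$ of Proposition~\ref{prop:univ2} lands in the hyperplane $H=\{\sum_i a_i=0\}$. The divergence is in the reverse inclusion. The paper stays inside the trivialization $\Sigma\cong M\times W$: it computes the Jacobian of $s$ along a circuit $\{f_1,\dots,f_{r+1}\}$ and checks that the vectors $(0,e_i-e_{r+1})$ lie in the limit of its image as $x\to0$; connectivity of the matroid (any two hyperplanes in a common circuit) then spans all of $H$. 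You instead use $\C^*$-equivariance of $\Sigma$ to reduce to showing $\pi_2^*(\Sigma)$ is dense in $H$, and settle that by applying Orlik--Terao \cite{ot95} to the deconed arrangement together with Crapo's theorem that $\abs{\chi(\P M)}=\beta(\A)>0$ for irreducible $\A$. Both routes are valid. The paper's is elementary and self-contained (explicit calculus with the section $s$), whereas yours imports two external results but is arguably more conceptual --- irreducibility enters precisely through positivity of the beta invariant --- and it shows slightly more, namely that a dense set of points of $\{0\}\times H$ arise as honest limits of critical points along rays, not just as infinitesimal limits of the image of $J_s$. Two minor points of hygiene: $\sum_i a_i$ is constant only in $x$, not on $V\times\C^n$ (though your meaning is clear); and the density step tacitly uses that the classical and Zariski closures of the constructible set $\pi_2^*(\Sigma)$ agree, so that generic nonemptiness of $\Sigma_\lambda$ on $H$ really does place all of $\{0\}\times H$ in the classical closure $\overline{\Sigma}$.
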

\begin{proof}
The second coordinate of the map $s$ from \eqref{eq:section} 
lies in the hyperplane $H:=\set{\lambda\in\C^n\colon \sum_{i=1}^n\lambda_i=0}
={\rm span}(e_i-e_j\colon 1\leq i,j\leq n)$,
so the projection of $\overline{\Sigma}$ onto $\C^n$ also lies in $H$.

To show equality, let $J_s$ be the Jacobian of $s$, and use calculus
to check that the limit of the image of $J_s$ at $x=0$
contains a set of vectors which span $H$.  Reordering the
hyperplanes if necessary, suppose that
$\set{f_1,\ldots,f_{r+1}}$ form a circuit in $\A$.  By definition,
any $r$ of the set are linearly independent, and there exist 
nonzero scalars $c_1,\ldots,c_{r+1}$, for which
$\sum_{i=1}^{r+1} c_if_i=0$.  Regarding $f_{r+1}$ as a function
of $\set{f_i\colon 1\leq i\leq r}$, we have 
\begin{equation}\label{eq:pfclosure}
\frac{\partial f_{r+1}}{\partial f_i}=-c_i/c_{r+1},
\end{equation}
for each $1\leq i\leq r$.
Let $\lambda=\sum_{i=1}^{r+1}c_ie_i$: by construction, $\lambda\in W$.
Now evaluate $J_s$
at $(x,\lambda)$.  Consider partial derivatives of the
$j$th coordinate of $\pi_2^{*}\circ s$, for $1\leq j\leq r+1$:
$$
\frac{\partial}{\partial{f_i}}\lambda_j f_j(x)=
\begin{cases}
c_i&\text{if $j=i$;}\\
c_{r+1}(-c_i/c_{r+1})&\text{if $j=r+1$;}\\
0&\text{otherwise.}
\end{cases}
$$
Since the coefficients $c_i$ are nonzero, this implies $(0,e_i-e_{r+1})$
is in the limit of the image of $J_s$
for each $1\leq i\leq r$.  By linearity, $(0,e_i-e_j)\in\overline{\Sigma}$
whenever the hyperplanes indexed by $i$ and $j$ are contained in a 
common circuit.  Since $\A$ is irreducible, its underlying matroid is
connected, so any two hyperplanes are 
contained in a common circuit.  It follows that
the closure of $\Sigma$ over $x=0$ equals $H$.
\end{proof}

\subsection{The logarithmic ideal}
The critical variety $\Sigma$ becomes more tractible when 
it is extended to the affine space $V$.  We indicate two natural
ways to do this which turn out to coincide.

As in \cite{ot95}, we may apply the logarithmic derivations $\derA$
to obtain critical equations in the polynomial ring $S$.  Let
$\Ilog=\Ilog(\A)=(\ip{\der_\kk(\A),\omega_\b{a}})$ be the image of the duality
pairing.
It follows from \eqref{eq:defDer} that $\Ilog$ is actually an ideal in
the
polynomial ring $S$, rather than just the localization $S_Q$.  We
will call $\Ilog(\A)$ the {\em logarithmic ideal} of critical sets for $\A$.

If the arrangement $\A$ is free, one can write generators of $\Ilog$
explicitly
as follows.  First, $\derA$ is a free $R$-module with some homogeneous
basis $\set{D_1,\ldots,D_\ell}$.  Then 
\begin{equation}\label{eq:Deqs}
D_i=\sum_{j=1}^{\ell}g_{ij}\partial/\partial x_j
\end{equation}
for some polynomials $\set{g_{ij}}$.  Let $m_i$ denote the (total)
degree
of $D_i$, for each $i$, ordering $D_1,\dots,D_\ell$ so that $m_1\leq\cdots\leq 
m_\ell$.  We may assume $D_1$ is the Euler derivation, and $m_1=0$.
The numbers $\set{m_i}$ are classically called the exponents of $\A$.
\begin{proposition}
If $\A$ is a free arrangement, then 
the ideal $\Ilog$ has homogeneous generators in the exponents of $\A$.
\end{proposition}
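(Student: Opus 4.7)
The plan is to exploit the fact that freeness of $\A$ gives an $R$-basis of $\derA$, which after base change yields an $S$-basis of $\der_\kk(\A)$, and then show the pairings of these basis elements against $\omega_\b{a}$ have the claimed degrees.

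First I would note that $\der_\kk(S)\cong S\otimes_R \der_\C(R)$, and that since the logarithmic condition $\theta(Q)\in(Q)$ involves only polynomials in $R$, we have $\der_\kk(\A)\cong S\otimes_R\derA$. Hence if $\derA$ is $R$-free on a homogeneous basis $D_1,\ldots,D_\ell$ of polynomial degrees $m_1,\ldots,m_\ell$ (the exponents, with $D_1$ the Euler derivation and $m_1=0$), the same list forms an $S$-basis of $\der_\kk(\A)$. Because the duality pairing $\ip{\cdot\,,\omega_\b{a}}$ is $S$-linear in the first argument, the ideal $\Ilog$ is generated over $S$ by the $\ell$ elements $\ip{D_i,\omega_\b{a}}$ for $1\leq i\leq \ell$.

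Next I would compute these generators explicitly. Using \eqref{eq:omega_a1},
\[
\ip{D_i,\omega_\b{a}} \;=\; \sum_{k=1}^n a_k\,\frac{D_i(f_k)}{f_k}.
\]
The standard equivalent form of the definition \eqref{eq:defDer} says that $D\in\derA$ iff $D(f_k)\in(f_k)$ for every $k$ (the $f_k$ being distinct linear forms), so each $D_i(f_k)/f_k$ lies in $R$. Since $D_i$ is homogeneous of polynomial degree $m_i$ and $f_k$ is linear, $D_i(f_k)$ has degree $m_i$ in the $x$-variables, and thus $D_i(f_k)/f_k\in R$ is homogeneous of degree $m_i$. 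Therefore $\ip{D_i,\omega_\b{a}}$ is homogeneous of $x$-degree $m_i$ (and linear in the weight variables $a_k$), giving a set of $\ell$ homogeneous generators of $\Ilog$ whose degrees are precisely the exponents of $\A$.

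The argument is essentially bookkeeping, so there is no real obstacle; the only point requiring a moment's care is the identification $\der_\kk(\A)=S\otimes_R\derA$, which lets the freeness hypothesis over $R$ transfer to a statement about $S$-generators of $\Ilog$.
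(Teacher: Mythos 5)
Your proof is correct and follows essentially the same route as the paper's: pair a homogeneous basis of the free module $\derA$ against $\omega_{\b a}$ and count degrees (the paper writes the same pairing in coordinates as $\sum_{j}g_{ij}d_j$ rather than $\sum_{k}a_k D_i(f_k)/f_k$, but this is only a reorganization of the sum). The one blemish is an off-by-one in a single sentence: under the paper's convention the Euler derivation has degree $m_1=0$, so the coefficients of $D_i$ have degree $m_i+1$ and hence $D_i(f_k)$ has degree $m_i+1$, whence $D_i(f_k)/f_k$ has degree $m_i$ as you conclude --- your intermediate claim that $D_i(f_k)$ itself has degree $m_i$ is inconsistent with that conclusion.
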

\begin{proof}
Apply the derivations \eqref{eq:Deqs} to $\omega_{\b a}$, introduced in
\eqref{def:omega_a}.  Explicitly,
\begin{equation}\label{eq:defI3}
\Ilog=\Big(\sum_{j=1}^\ell g_{ij}d_j:1\leq
i\leq\ell\Big).
\end{equation} 
Since each $d_j$ is a rational function with simple poles
and each $g_{ij}$ has degree
$m_i+1$, the polynomial $\sum_{j=1}^\ell g_{ij}d_j$ is homogeneous
of degree $m_i$.
\end{proof}
If $\A$ is not free, only the generators of $\derA$ in minimal degree
are easily understood.  In particular, if $\A$ is irreducible, 
the Euler derivation generates $\derA_0$, which gives the following.
\begin{proposition}\label{prop:Ideg0}
If $\A$ is an irreducible arrangement, then the degree $0$ part of
$I$ is generated by $\sum_{i=1}^n a_i$.
\end{proposition}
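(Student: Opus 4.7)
The plan is to exploit the polynomial grading on $S$ (with $\deg a_i = 0$ and $\deg x_j = 1$) to reduce the question to a statement about the degree-$0$ part of $\derA$. If $D \in \derA$ is homogeneous of degree $m$, meaning $D(R_k) \subset R_{k+m}$, then $D(f_i) = g_i f_i$ with $g_i \in R_m$, so $D(\omega_\b{a}) = \sum_i a_i g_i$ is homogeneous of polynomial degree $m$ in $S$. Since $S_k = 0$ for $k < 0$, the degree-$0$ part $I_0 \subset \kk$ coincides with the $\kk$-submodule generated by $\set{D(\omega_\b{a}) : D \in \derA_0}$. The Euler derivation $E = \sum_j x_j\,\partial/\partial x_j$ lies in $\derA_0$ and satisfies $E(\omega_\b{a}) = \sum_{i=1}^n a_i$, so $\sum_i a_i \in I_0$. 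Hence it suffices to prove that $\derA_0 = \C \cdot E$ whenever $\A$ is irreducible.

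Next I would identify a degree-$0$ derivation $\theta \in \der_\C(R)$ with the endomorphism $\theta_1$ it induces on $V^* = R_1$. The logarithmic condition $\theta(f_i) \in (f_i)$, specialized to degree $1$, becomes $\theta_1(f_i) = c_i f_i$ for some $c_i \in \C$, so each defining form $f_i$ is an eigenvector of $\theta_1$. Since $\A$ is essential, the $f_i$'s span $V^*$, so $\theta_1$ is diagonalizable; write $V^* = \bigoplus_{\alpha=1}^r W_\alpha$ for the decomposition into eigenspaces with distinct eigenvalues, and $V = \bigoplus_\alpha V_\alpha$ for the dual decomposition, so that $W_\alpha = V_\alpha^*$. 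Each $f_i$ lies in some $W_{\sigma(i)}$, defining a map $\sigma \colon [n] \to \set{1,\ldots,r}$.

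The main obstacle is to invoke irreducibility to force $r = 1$. Every eigenspace must be hit by $\sigma$, for otherwise the $f_i$'s would lie in a proper subspace of $V^*$, contradicting essentiality; hence every $P_\alpha := \sigma^{-1}(\alpha)$ is nonempty. If $r \geq 2$, regrouping $V = V_1 \oplus (V_2 \oplus \cdots \oplus V_r)$ together with the partition $[n] = P_1 \sqcup (P_2 \cup \cdots \cup P_r)$ would exhibit $\A$ as a nontrivial direct sum, contradicting irreducibility. So $r = 1$, $\theta_1$ is a scalar endomorphism, and $\theta$ is a scalar multiple of $E$. This establishes $\derA_0 = \C \cdot E$, and therefore $I_0 = \kk \cdot \sum_{i=1}^n a_i$, as required.
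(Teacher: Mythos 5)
Your proof is correct and follows the route the paper takes, except that the paper gives no proof at all: it simply asserts that for irreducible $\A$ the Euler derivation generates $\derA_0$ and reads off the proposition, whereas you supply a complete argument for that assertion via the eigenspace decomposition of a degree-zero logarithmic derivation and the dual splitting of $V$. The only step worth making fully explicit is that $\derA_{-1}=0$ for an essential arrangement (a degree $-1$ derivation sends each $f_i$ to a constant, which lies in $(f_i)$ only if it vanishes), so that no degree $-1$ derivations paired with degree $1$ coefficients from $S$ can contribute to $I_0$; with that remark your reduction of $I_0$ to $\set{\ip{D,\omega_{\b a}}\colon D\in\derA_0}$ is airtight.
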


\begin{theorem}\label{thm:closure}
For any arrangement $\A$, 
$V(I(\A))$ is the closure of $\Sigma(\A)$ in $V\times\C^n$.  
\end{theorem}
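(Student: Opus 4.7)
The plan is to establish the two set-theoretic inclusions $\overline{\Sigma(\A)}\subseteq V(I(\A))$ and $V(I(\A))\subseteq\overline{\Sigma(\A)}$ separately, after reducing to the irreducible case. If $\A=\A_1\oplus\A_2$, the decomposition $\derA\cong\der(\A_1)\oplus\der(\A_2)$ together with $\omega_{\b a}=\omega_{\b{a}_1}+\omega_{\b{a}_2}$ yields $I(\A)=I(\A_1)S+I(\A_2)S$; combined with the evident $\Sigma(\A)\cong\Sigma(\A_1)\times\Sigma(\A_2)$ and the fact that closure commutes with finite products, this reduces the theorem to its irreducible summands. So I shall assume $\A$ is irreducible.

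The inclusion $\overline{\Sigma}\subseteq V(I)$ is immediate from the definitions. Every generator of $I$ has the form $\ip{\theta,\omega_{\b a}}$ for $\theta$ in the $\kk$-linear extension of $\derA$, and its value at a point $(x,\lambda)\in\Sigma$ is a scalar pairing involving the covector $\omega_\lambda(x)$, which vanishes by definition of $\Sigma$. Since $V(I)$ is Zariski-closed in $V\times\C^n$, taking closures yields the desired inclusion.

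For the reverse containment, the crucial observation is that every derivation of the localized ring $R_Q$ becomes logarithmic after clearing $Q$, so $\derA\otimes_R R_Q=\der_\C(R_Q)$ and hence $I\cdot S_Q=\Imer$ inside $S_Q$. This gives $V(I)\cap(M\times\C^n)=\Sigma$, so $V(I)$ already agrees with $\overline{\Sigma}$ over the open set $M\times\C^n$, and it remains only to show that $V(I)$ has no irreducible components supported entirely in the exceptional locus $\bigcup_{j=1}^n H_j\times\C^n$. Over the origin this is handled directly by Propositions~\ref{prop:closure1} and~\ref{prop:Ideg0}: the degree-zero component of $I$ is generated by $\sum_{i=1}^n a_i$, and this cuts out in $\{0\}\times\C^n$ precisely the hyperplane $H=\{\lambda\in\C^n\colon\sum_i\lambda_i=0\}$ that also equals $\overline{\Sigma}\cap(\{0\}\times\C^n)$.

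The main obstacle is ruling out extra components of $V(I)$ living over points $x\in\bigcup_j H_j\setminus\{0\}$. I would attack this locally at the generic point of each positive-dimensional flat $X\in L(\A)$: the linear forms $f_i$ with $H_i\not\supseteq X$ become units in $R_\p$ at the corresponding prime $\p$, so the local generators of $I(\A)$ at $\p$ transform into the generators for the lower-rank localized arrangement $\A_X$, using the localization identity of \S\ref{ss:local}. An induction on the rank of $\A$---with the rank-one case trivial and the essential $X=\{0\}$ case covered by the preceding paragraph---then finishes the proof.
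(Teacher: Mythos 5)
Your overall strategy coincides with the paper's: reduce to irreducible summands, observe that $\overline{\Sigma}\subseteq V(I)$ and that $V(I)\cap(M\times\C^n)=\Sigma$ because $I_Q=\Imer$, dispose of the fibre over $x=0$ by comparing Propositions~\ref{prop:Ideg0} and~\ref{prop:closure1}, and then argue locally over the remaining points of $\bigcup_j H_j$. Everything through your third paragraph is sound. The gap is in the final step. The localization identity of \S\ref{ss:local} does give $\der(\A)_\p=\der(\A_X)_\p$, but it does \emph{not} follow that the local generators $\ip{\theta,\omega_{\b a}}$ of $I(\A)_\p$ become generators of $I(\A_X)$. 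Writing $\omega_{\b a}=\omega'+\eta$ with $\omega'=\sum_{H_j\supseteq X}a_j\,\d f_j/f_j$ and $\eta=\sum_{H_i\not\supseteq X}a_i\,\d f_i/f_i$, the form $\eta$ is regular at $\p$ but nonzero, and it cannot be absorbed by a change of the weight variables of $\A_X$: that would require each such $f_i$ to lie in the span of $\set{f_j\colon H_j\supseteq X}$, which is impossible since every form in that span vanishes on $X$ while $f_i$ does not. Concretely, for three concurrent lines $f_1=x$, $f_2=y$, $f_3=x+y$ and $X=H_1$, one computes that $V(I(\A))$ over the generic point of $H_1$ is cut out by $a_1=0$ \emph{and} $a_2+a_3=0$, whereas $V(I(\A_{H_1}))$ is cut out by $a_1=0$ alone. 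The local equations genuinely remember the weights of the discarded hyperplanes, so the identification on which your rank induction rests is false.

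The paper circumvents this by deleting a \emph{single} hyperplane $H_n$ with $f_n(x)\neq0$ and inducting on the number of hyperplanes rather than on the rank. Irreducibility guarantees that the deletion $\A'$ is still essential and that $f_n=\sum_{i<n}c_if_i$, so the substitution $a_i\mapsto a_i-c_if_ia_n/f_n$ (an automorphism of $S_{f_n}$) absorbs the regular term $a_n\,\d f_n/f_n$ and yields the local identification \eqref{eq:localiso} of $V(I(\A))$ near $(x,\lambda)$ with $V(I(\A'))\times\C$ near a corresponding point; one then re-splits $\A'$ into irreducibles via Lemma~\ref{lem:decomp} and continues. The target of each reduction is the deletion $\A'$ (still of full rank), never the localization $\A_X$. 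Replacing your passage to $\A_X$ with this one-hyperplane-at-a-time deletion repairs the argument.
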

Accordingly, we will write $\OSigma=V(I)$.  We 
defer the proof to \S\ref{ss:pfclosure}.
\begin{corollary}\label{cor:closure}
For any arrangement $\A$ of rank $\ell$, 
the variety $\OSigma$ is irreducible of codimension $\ell$.
\end{corollary}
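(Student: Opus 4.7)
The plan is to deduce both statements directly from Theorem~\ref{thm:closure} together with the bundle structure of Proposition~\ref{prop:univ2}. By Theorem~\ref{thm:closure}, $\OSigma$ is by definition the Zariski closure of $\Sigma$ in $V\times\C^n$, so it suffices to show that $\Sigma$ itself is irreducible of dimension $n$ in $V\times\C^n$, since taking Zariski closure preserves irreducibility and dimension.

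For irreducibility, I would invoke Proposition~\ref{prop:univ2}, which gives the isomorphism $\Sigma\cong M\times W$ with $W\subseteq\C^n$ a linear subspace of codimension $\ell$. The hyperplane complement $M$ is a nonempty Zariski-open subset of the irreducible affine space $V=\C^\ell$, hence is irreducible; and $W\cong\C^{n-\ell}$ is irreducible. Therefore the product $M\times W$ is irreducible, and so is $\Sigma$, and thus so is its closure $\OSigma$.

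For the codimension claim, the same isomorphism gives
\[
\dim\Sigma=\dim M+\dim W=\ell+(n-\ell)=n,
\]
and Zariski closure preserves dimension, so $\dim\OSigma=n$. Since $V\times\C^n$ has dimension $\ell+n$, the codimension of $\OSigma$ is $\ell$, as claimed.

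There is essentially no obstacle here: the work has already been done in establishing Proposition~\ref{prop:univ2} (which trivializes $\Sigma$ as a vector bundle over $M$) and Theorem~\ref{thm:closure} (which identifies the defining ideal with $I$). The only point to be careful about is that the closure of an irreducible quasi-affine variety is irreducible of the same dimension, which is a standard fact. No further properties of $\A$ beyond the rank hypothesis are needed.
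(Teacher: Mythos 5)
Your argument is correct. It differs mildly from the paper's: both proofs reduce the statement to properties of $\Sigma$ via Theorem~\ref{thm:closure}, but the paper works ideal-theoretically, observing that the vanishing ideal of $\OSigma$ is $\rad(I)$, which is the contraction of $\Imer$ under $S\hookrightarrow S_Q$ and hence prime because $\Imer$ is prime (Proposition~\ref{prop:univ}); irreducibility and the codimension then follow at once. You instead argue geometrically from the trivialization $\Sigma\cong M\times W$ of Proposition~\ref{prop:univ2}: a product of irreducibles is irreducible of dimension $\ell+(n-\ell)=n$, and closure preserves both. Your route has the small advantage of making the connectedness/irreducibility of $\Sigma$ completely explicit rather than quoting primeness of $\Imer$, while the paper's route is what later feeds into Corollary~\ref{cor:Iprime} (where one wants to know that $\rad(I)$ is prime, not merely that $\OSigma$ is irreducible). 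One point worth a remark in your write-up: Theorem~\ref{thm:closure} is proved by a local, analytic-topology argument, so strictly speaking it identifies $V(I)$ with the classical closure of $\Sigma$; since $\Sigma$ is a constructible (indeed locally closed) subset of $V\times\C^n$, its classical and Zariski closures coincide, so your use of the Zariski closure is justified, but the identification deserves a word.
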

\begin{proof}
By Theorem~\ref{thm:closure}, the vanishing ideal of $\OSigma$ is
$\rad(I)$.  This is the contraction of $I_Q=\Imer$, and $\Imer$ is 
prime by Proposition~\ref{prop:univ}.  It follows that the radical of $I$
is also prime.
\end{proof}
In general, the ideal $I$ need not be radical (Example~\ref{ex:ER}).
However, we will see that if $\A$ is tame, then $I$ is actually prime
(Corollary~\ref{cor:Iprime}.)

\subsection{A naive ideal}
For purposes of comparison, let $\IQ=(Qd_j\colon 1\leq j\leq \ell)$, 
the ideal of $S$ obtained by clearing denominators in 
Definition~\ref{def:Imero}.  From a geometric point of view, this ideal
should be replaced by an ideal quotient by $Q$.
It turns out that doing so recovers the logarithmic ideal.  We note
that a closely related result appears in the algorithm of \cite{HKS05}:
in that setting, the weights $a_i$ are specialized to natural numbers,
while the polynomial $Q$ is generalized to an arbitrary homogeneous ideal.

\begin{proposition}\label{prop:sat}
For any arrangement $\A$, we have
$(\IQ:Q)=\Ilog$. 
\end{proposition}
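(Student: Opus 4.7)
The plan is to prove both inclusions separately. The inclusion $I\subseteq(\IQ\colon Q)$ is the easy direction. Given any logarithmic derivation $\theta\in\der(\A)\otimes_R S$, write $\theta=\sum_{j=1}^\ell g_j\,\partial/\partial x_j$ with $g_j\in S$. Then
\[
\theta\lrcorner\,\omega_{\b a}=\sum_{j=1}^\ell g_j d_j,
\]
so $Q\cdot(\theta\lrcorner\,\omega_{\b a})=\sum_{j=1}^\ell g_j(Qd_j)\in\IQ$, showing every generator of $I$ lies in $(\IQ\colon Q)$.

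For the reverse inclusion, suppose $h\in S$ satisfies $Qh\in\IQ$, so $Qh=\sum_{j=1}^\ell g_j(Qd_j)$ for some $g_j\in S$. Dividing by $Q$ inside the localization $S_Q$ gives $h=\sum_j g_j d_j$. Forming the derivation $\theta=\sum_j g_j\,\partial/\partial x_j\in\der_\kk(S)$, we obtain $\theta\lrcorner\,\omega_{\b a}=h$, which lies in $S$ by hypothesis. By the definition of $I$, to conclude that $h\in I$ it suffices to show that $\theta\in\der(\A)\otimes_R S$, i.e.\ that $\theta(f_k)\in(f_k)S$ for every $k=1,\ldots,n$.

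The key step is to deduce logarithmicity of $\theta$ from the condition $\theta\lrcorner\,\omega_{\b a}\in S$. Fix $k$ and split
\[
\omega_{\b a}=a_k\frac{\d f_k}{f_k}+\omega'_k,\qquad\text{where}\quad \omega'_k=\sum_{i\ne k}a_i\frac{\d f_i}{f_i}.
\]
Then $\theta\lrcorner\,\omega_{\b a}=a_k\theta(f_k)/f_k+\theta\lrcorner\,\omega'_k$. The term $\theta\lrcorner\,\omega'_k$ has poles only along the hyperplanes $f_i=0$ with $i\ne k$, so it is regular at a generic point of $V(f_k)$. Since $\theta\lrcorner\,\omega_{\b a}\in S$ is regular everywhere, the residue of $a_k\theta(f_k)/f_k$ along $V(f_k)$ must vanish, forcing $f_k\mid a_k\theta(f_k)$ in $S$. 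Because $S/(f_k)\cong\kk[x_1,\dots,x_\ell]/(f_k)$ is a domain in which $a_k$ is a nonzero (hence nonzerodivisor) element, this gives $f_k\mid\theta(f_k)$, as required.

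The main obstacle is this third step: one must be careful that $\omega'_k$ contributes no residue along $V(f_k)$, for which the separation of the pole at $f_k$ from the other poles is essential, together with the fact that the indeterminate $a_k$ remains a nonzerodivisor modulo $f_k$ in the enlarged ring $S$. Once that is in place, the equality $(\IQ\colon Q)=I$ follows, completing the proof.
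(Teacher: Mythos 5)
Your proof is correct and follows essentially the same route as the paper's: both directions hinge on forming the derivation $\theta=\sum_j g_j\,\partial/\partial x_j$ and verifying $\theta(f_k)\in(f_k)$ by isolating the simple pole of $\omega_{\b a}$ along each $H_k$ and using that $a_k$ is a nonzerodivisor in the domain $S/(f_k)$. The only cosmetic difference is that you phrase the key divisibility as a residue/valuation argument along $V(f_k)$, whereas the paper clears denominators by $Q$ and reduces modulo each $f_i$; these are the same computation.
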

\begin{proof} 
By definition, $(I':Q)=\set{s\in S\colon sQ\in I'}$.
To show
$\Ilog\subseteq (\IQ:Q)$, write any $\theta\in\der_{\C}(\A)$ as
$\theta=\sum_{j=1}^\ell r_j\partial/\partial x_j$ for some coefficients
$r_j\in S$. Then 
\[
Q\ip{\theta,\omega_\b{a}}=Q\sum_{j=1}^\ell r_jd_j=\sum_{j=1}^\ell
r_j(Qd_j)\in \IQ,
\]
so $\ip{\theta,\omega_\b{a}}\in (\IQ:Q)$.
To show the other inclusion, suppose $f\in (\IQ:Q)$.  We may write
$$
fQ=\sum_{j=1}^\ell r_j Qd_j
$$
for some polynomials $r_j\in S$; that is, $f=\sum_j r_j d_j\in S$.

Form the derivation $\theta=\sum_{j=1}^\ell r_j\frac{\partial}{\partial
x_j}$. Since $\ip{\theta,\omega_\b{a}}=f$, to show $f\in \Ilog$ it is
enough
to show $\theta\in\derA\otimes S$. In turn, since $\ip{\theta,\d
f_i}=\theta(f_i)$, we need to prove $\ip{\theta,\d f_i}\in(f_i)$ for
each $i$, $1\leq i\leq n$ (by \cite[Prop.~4.8]{ot}).

For this, use \eqref{eq:omega_a1} to write
\[
fQ= \ip{\theta,\omega_{\b a}}Q
=\sum_{i}\ip{\theta,\d f_i}a_i Q/f_i.
\]
Since $fQ\in(Q)$, the image of $fQ$ under the map $S\rightarrow
S/(f_1)\times\cdots\times S/(f_n)$ is zero.  Since $Q/f_i$ is
divisible by all $f_j$, $j\neq i$, it follows the image of
$\ip{\theta,\d f_i}a_i Q/f_i$ is also zero for each $i$. Since $a_i
Q/f_i\neq0$ in $S/(f_i)$, and $(f_i)$ is a prime ideal, $\ip{\theta,\d
f_i}=0$ in $S/(f_i)$, and $f\in I$ as claimed.
\end{proof} 
\subsection{Complete intersections}
It follows from Proposition~\ref{prop:univ} together with
Corollary~\ref{cor:closure} that the codimension
of $I$ and $\Imer$ both equal $\ell$.  Since $S$ and therefore $S_Q$
are
Cohen-Macaulay, the depth of $I$ and $\Imer$ are also both $\ell$, see
\cite[Theorem~18.7]{eisenbook}.  
Since $\Imer$ is generated by $\ell$ elements of $S_Q$, we obtain the following.
\begin{lemma}\label{lem:ci}
The ideal $\Imer$ is a complete intersection. 
\end{lemma}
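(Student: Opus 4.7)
The plan is to observe that $\Imer$ has exactly as many generators as its codimension, and then invoke the standard fact that in a Cohen--Macaulay ring, such an ideal is automatically a complete intersection.

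First, I would record that the ambient ring $S=\C[a_1,\ldots,a_n,x_1,\ldots,x_\ell]$ is a polynomial ring over $\C$, hence Cohen--Macaulay, and localization preserves this property, so $S_Q$ is Cohen--Macaulay as well. Next, by Proposition~\ref{prop:univ}, $\Sigma=V(\Imer)$ is a codimension-$\ell$ submanifold of $V\times\C^n$, and intersecting with the open set $M\times\C^n$ shows that $\Imer$ has height $\ell$ in $S_Q$. Since $S_Q$ is Cohen--Macaulay, depth and height coincide for proper ideals, so $\operatorname{grade}(\Imer)=\ell$ as well (cf.\ \cite[Theorem~18.7]{eisenbook}, to which the discussion preceding the lemma already appeals).

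On the other hand, by construction $\Imer$ is generated by the $\ell$ elements $d_1,\dots,d_\ell$ coming from the components of $\omega_{\b a}$ in the $\d x_j$ basis, as in \eqref{def:omega_a}. I would then invoke the standard principle that in a Cohen--Macaulay ring, an ideal generated by $c$ elements whose height equals $c$ is automatically generated by a regular sequence (this is immediate from the unmixedness theorem, or equivalently from the fact that grade is bounded above by the minimal number of generators, with equality forcing a regular sequence). Applied to $\Imer$ with $c=\ell$, this yields that $d_1,\dots,d_\ell$ is a regular sequence in $S_Q$, so $\Imer$ is a complete intersection.

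There is essentially no obstacle: the nontrivial input is the codimension count of Proposition~\ref{prop:univ}, which has already been proved, and the rest is formal commutative algebra. The only minor point of care is that $\Imer$ lives in the localization $S_Q$ rather than $S$ itself, but this poses no difficulty since Cohen--Macaulayness and the generator count both transfer under localization.
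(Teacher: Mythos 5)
Your proposal is correct and follows essentially the same route as the paper: the text preceding the lemma deduces from Proposition~\ref{prop:univ} that the codimension (hence, by Cohen--Macaulayness of $S_Q$, the depth) of $\Imer$ is $\ell$, and concludes from the fact that $\Imer$ is generated by the $\ell$ elements $d_1,\dots,d_\ell$. Your added remark that the generators then automatically form a regular sequence is exactly the standard fact the paper is implicitly invoking.
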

The logarithmic critical set ideal behaves more subtly.
\begin{theorem}\label{th:ci} 
The ideal $I$ is a complete intersection if and only if $\A$ is free.
\end{theorem}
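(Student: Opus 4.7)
The plan is to prove the two directions separately; the reverse implication is the substantive one.

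Forward direction ($\A$ free $\Rightarrow$ $I$ a complete intersection): given an $R$-basis $D_1,\dots,D_\ell$ of $\derA$, formula~\eqref{eq:defI3} exhibits $I$ as an ideal with $\ell$ generators $\ip{D_i,\omega_{\b a}}$. Corollary~\ref{cor:closure} gives that $I$ has codimension $\ell$, and since $S$ is a polynomial ring, hence Cohen-Macaulay, any ideal of codimension $\ell$ generated by $\ell$ elements is automatically a complete intersection.

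Converse direction: I would exploit the natural bigrading on $S=\kk\otimes R=\C[a_1,\dots,a_n,x_1,\dots,x_\ell]$, with $\deg a_i=(1,0)$ and $\deg x_j=(0,1)$. Since $\omega_{\b a}=\sum_i a_i\,\d f_i/f_i$ is linear in the $a_i$'s, the evaluation map
\[
\phi\colon \derA\longrightarrow S,\qquad D\longmapsto \ip{D,\omega_{\b a}}=\sum_i a_i\bigl(D(f_i)/f_i\bigr),
\]
takes values in the $a$-linear part of $S$. This map is $R$-linear; it is injective, because $\phi(D)=0$ forces $D(f_i)=0$ for all $i$ by independence of the $a_i$'s, whence $D=0$ by essentiality of $\A$; and its image is precisely the $a$-linear part of $I$, since $I$ is generated by $\phi(\derA)$ as an $S$-module. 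A direct bigraded computation then shows that $I_{(a\ge 2)}\subseteq\m I$, where $\m$ is the bigraded irrelevant ideal of $S$, so $I/\m I$ is concentrated in $a$-degree one and $\phi$ induces an isomorphism $I/\m I\cong\derA/R_+\derA$, where $R_+$ is the homogeneous maximal ideal of $R$. In particular, the minimal number of $S$-generators of $I$ equals the minimal number of $R$-generators of $\derA$.

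Now assume $I$ is a complete intersection. Then $I$ has exactly $\ell$ minimal generators, so $\derA$ is generated by $\ell$ elements as an $R$-module. Since $\derA\subseteq\der_\C(R)\cong R^\ell$, it is torsion-free; and since the defining condition $\theta(Q)\in(Q)$ becomes vacuous after inverting $Q$, the module $\derA$ has generic rank $\ell$. A torsion-free module of rank $\ell$ over a domain that is generated by $\ell$ elements is automatically free: the surjection $R^\ell\twoheadrightarrow\derA$ has torsion-free kernel which vanishes upon tensoring with the fraction field, and so is already zero. Hence $\derA$ is $R$-free of rank $\ell$, and $\A$ is free. The main obstacle is the bigraded computation in the second paragraph linking the count of minimal generators of $I$ to that of $\derA$; once that link is established, the conclusion is essentially formal.
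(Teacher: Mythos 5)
Your proof is correct and follows essentially the same route as the paper's: a complete intersection forces $I$ to have exactly $\ell$ minimal generators, this count transfers to $\derA$, and a torsion-free rank-$\ell$ module over a domain generated by $\ell$ elements is free. Your bigraded Nakayama computation carefully justifies a step the paper only asserts (that generators of $I$ correspond to generators of the derivation module, which rests on the injectivity of the pairing with $\omega_{\b a}$), but the underlying argument is the same.
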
 
\begin{proof} 
If $\A$ is free, then $I$ is generated by $\ell$ elements,
\eqref{eq:defI3}.  Since $I$ has codimension $\ell$, by 
Corollary~\ref{cor:closure}, $I$ is a complete intersection.  On the other hand, 
suppose that $I$ is a complete intersection.  Then it has
some $\ell$ homogeneous generators $f_1,f_2,\ldots,f_\ell$.  For each $i$,
$1\leq i\leq \ell$, let $\theta_i\in\der_C(\A)$ be a derivation for which 
$\theta_i(\omega_{\b a})=f_i$.  By definition of $I$, the module
$\der_\kk(\A)$ is generated by $\theta_1,\ldots,\theta_\ell$.

Since the number of generators is equal to its rank and $S$ is a domain,
$\der_\kk(\A)$ is a free $S$-module.  It follows $\der_\C(\A)$ is a
flat $R$-module.  Since $\der_\C(\A)$ is a finitely-generated graded
module, it is free.
\end{proof}
\begin{example}
The arrangement $X_3$, defined by $Q=xyz(x+y)(x+z)(y+z)$, is not
free.  The ideal $I$ is minimally generated by $4$ generators, not $3$.
\end{example}
\begin{example}[Pencils]\label{ex:pencil}
An arrangement $\A$ of $n$ lines in $\C^2$ is free 
(\cite[Example 4.20]{ot}).  $\der_\C(\A)$ has a basis $\set{D_1,D_2}$ where
$D_2=Q/f_1(\frac{\partial f_1}{\partial x_1}\frac{\partial}{\partial x_2} -
\frac{\partial f_1}{\partial x_2}\frac{\partial}{\partial x_1})$, 
and $D_1$ is the Euler derivation.
Then $I$ is a complete intersection generated in degrees $0,n-2$:
$$
I=\left(\sum_{H\in\A}a_H, \sum_{i=2}^n a_i\frac{Q}{f_1f_i}\left(
\frac{\partial f_1}{\partial x_1}\frac{\partial f_i}{\partial x_2} -
\frac{\partial f_1}{\partial x_2}\frac{\partial f_i}{\partial x_1}\right)
\right).
$$
\end{example}

\section{Koszul complexes}\label{sec:koszul}
In this section, we indicate how to determine the codimension of a 
critical set using the complexes of forms from \S\ref{subsec:forms}.
The key idea is that $(\Omega^\hdot_{S/C}(*\A),\omega_{\b a})$ 
is the Koszul complex for the defining ideal of $\Sigma$.  In the
tame case, $(\Omega^\hdot_{S/C}(\A),\omega_{\b a})$ is a resolution,
not necessarily free, of the defining ideal of $\OSigma$.

\subsection{Meromorphic forms}
Recall from \S\ref{subsec:forms} that
$\Omega^\hdot(*\A)$ is the space of meromorphic forms with poles on the
hyperplanes.
We may regard this as the exterior algebra on $R_Q$.
Then $(\Omega^\hdot_{S/\kk}(*\A),\omega_\b{a})$ is the Koszul
complex of the generators $\set{d_j}$ of $\Imer$, by definition of
$\omega_{\b a}$.  Since the depth of $\Imer$ is equal to $\ell$, we obtain
the following.
\begin{proposition}\label{prop:mer}
If $\A$ is a central, essential arrangement,
\begin{equation}\label{eq:res}
0\rightarrow\Omega^0_{S/\kk}(*\A)
\stackrel{\omega_\b{a}}{\rightarrow}\Omega^1_{S/\kk}(*\A)
\stackrel{\omega_\b{a}}{\rightarrow}\cdots
\stackrel{\omega_\b{a}}{\rightarrow} \Omega^\ell_{S/\kk}(*\A)\rightarrow
S_Q/\Imer\rightarrow0 
\end{equation} 
is an exact complex of $S_Q$-modules.
\end{proposition}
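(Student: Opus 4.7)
The plan is to identify $(\Omega^\hdot_{S/\kk}(*\A),\omega_\b{a})$ with the Koszul complex on the generators of $\Imer$ and then invoke the standard fact that the Koszul complex on a regular sequence is acyclic. Since $S=\kk\otimes R$ is a polynomial ring over $\kk$, the module $\Omega^p_{S/\kk}$ of relative K\"ahler $p$-forms is free on the wedges $\d x_{i_1}\wedge\cdots\wedge \d x_{i_p}$, and localization at $Q$ gives $\Omega^p_{S/\kk}(*\A)=\Omega^p_{S_Q/\kk}$, free on the same basis over $S_Q$. In particular, $\Omega^\hdot_{S/\kk}(*\A)\cong \bigwedge^\hdot_{S_Q}(S_Q^\ell)$ as graded $S_Q$-algebras. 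By \eqref{def:omega_a}, $\omega_\b{a}=\sum_{j=1}^\ell d_j\,\d x_j$, so left multiplication by $\omega_\b{a}$ is literally the Koszul differential on $S_Q$ associated with the $\ell$-tuple $(d_1,\ldots,d_\ell)$ generating $\Imer$.

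The second step is to verify that $(d_1,\ldots,d_\ell)$ is a regular sequence in $S_Q$. By Lemma~\ref{lem:ci} the ideal $\Imer\subset S_Q$ is a complete intersection: it has codimension $\ell$ and is generated by $\ell$ elements. Since the polynomial ring $S$ is Cohen-Macaulay, so is its localization $S_Q$, and in a Cohen-Macaulay ring depth equals codimension for any ideal (cf.\ \cite[Theorem~18.7]{eisenbook}). Hence $d_1,\ldots,d_\ell$ form a regular sequence in $S_Q$, so the associated Koszul complex
\[
0\to \textstyle\bigwedge^0_{S_Q}(S_Q^\ell)\to \bigwedge^1_{S_Q}(S_Q^\ell)\to\cdots\to\bigwedge^\ell_{S_Q}(S_Q^\ell)\to 0
\]
is a free resolution of $S_Q/(d_1,\ldots,d_\ell)=S_Q/\Imer$. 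Augmenting by the quotient map $\Omega^\ell_{S/\kk}(*\A)\twoheadrightarrow S_Q/\Imer$ yields exactly the exact sequence \eqref{eq:res}.

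There is essentially no obstacle here: once one observes that multiplication by $\omega_\b{a}$ on the exterior algebra of forms agrees with the Koszul differential on the generators of $\Imer$, the proposition reduces to the complete intersection property already established in Lemma~\ref{lem:ci} and the standard Cohen-Macaulay\,=\,regular sequence argument. The only very minor point to check is that the essentiality hypothesis is what guarantees that the number of independent relations $\omega_\b{a}=0$ is indeed $\ell$, so that the Koszul complex has the stated length and the sequence has exactly $\ell$ members.
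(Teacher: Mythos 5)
Your proof is correct and takes essentially the same route as the paper: the authors likewise identify $(\Omega^\hdot_{S/\kk}(*\A),\omega_{\b a})$ with the Koszul complex on the generators $d_1,\dots,d_\ell$ of $\Imer$ and conclude by depth-sensitivity, with $\depth\Imer=\ell$ coming from the codimension statement of Proposition~\ref{prop:univ} together with the Cohen--Macaulayness of $S_Q$, exactly as in your appeal to Lemma~\ref{lem:ci}. Your closing remark is right in spirit but slightly misstated: essentiality does not change the number of equations (there are always $\ell$ of them, one per coordinate); it is what forces their common zero locus to have codimension exactly $\ell$, which is the input needed for the depth argument.
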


\begin{definition}\label{def:Rlambda}
For $\lambda\in\C^n$, let
$R_\lambda=R$, regarded as an $S$-module via the homomorphism that maps
$a_i$ to $\lambda_i$, for each $i$, $1\leq i\leq n$.
\end{definition}

\begin{corollary}\label{cor:torQ} 
For any $\lambda\in\C^n$ and $0\leq p\leq \ell$, 
\begin{equation}\label{eq:torQ}
H^p(\Omega^\hdot(*\A),\omega_\lambda)\cong\Ext_{S_Q}^{p}(S_Q/
\Imer, (R_\lambda)_Q).
\end{equation}
If the critical set $\Sigma_\lambda$ is nonempty, then the codimension of 
$\Sigma_\la$ 
is the smallest $p$ for which $H^p(\Omega^\hdot(*\A),\omega_\lambda)\neq0$. 
\end{corollary}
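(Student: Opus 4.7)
The plan is to extract both assertions from Proposition~\ref{prop:mer}, which exhibits $(\Omega^\hdot_{S/\kk}(*\A),\omega_{\b a})$ as a free resolution of $S_Q/\Imer$ over $S_Q$; this is, up to reindexing, the Koszul complex on the regular sequence generating the complete intersection ideal $\Imer$ of Lemma~\ref{lem:ci}.

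For the isomorphism \eqref{eq:torQ}, I would reindex this resolution as $P_p=\Omega^{\ell-p}_{S/\kk}(*\A)$, so that $P_0\twoheadrightarrow S_Q/\Imer$, and realise $\Ext^p_{S_Q}(S_Q/\Imer,(R_\lambda)_Q)$ as the cohomology of $\Hom_{S_Q}(P_\hdot,(R_\lambda)_Q)$. Two identifications then convert this Hom complex into $(\Omega^\hdot(*\A),\omega_\lambda)$: first, the base-change isomorphism $\Omega^p_{S/\kk}(*\A)\cong S_Q\otimes_{R_Q}\Omega^p(*\A)$ turns $\Hom_{S_Q}(P_p,(R_\lambda)_Q)$ into $\Hom_{R_Q}(\Omega^{\ell-p}(*\A),R_Q)$; and second, the perfect exterior pairing
\[
\Omega^p(*\A)\otimes_{R_Q}\Omega^{\ell-p}(*\A)\to\Omega^\ell(*\A)\cong R_Q
\]
identifies the latter with $\Omega^p(*\A)$. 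Since the differential of $P_\hdot$ is $\omega_{\b a}\wedge$, the induced coboundary on the Hom complex is $\omega_\lambda\wedge$, the specialization of $\omega_{\b a}$ at $\b a=\lambda$, completing the identification.

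For the codimension statement, I would treat $(\Omega^\hdot(*\A),\omega_\lambda)$ directly as the cohomological Koszul complex $K^\hdot(\tilde d_1,\ldots,\tilde d_\ell;R_Q)$ on the specializations $\tilde d_j\in R_Q$ of the generators of $\Imer$ at $\b a=\lambda$, with $I_\lambda:=(\tilde d_1,\ldots,\tilde d_\ell)\subseteq R_Q$. The standard identity
\[
\min\bigl\{p:H^p(K^\hdot(f_1,\ldots,f_\ell;A))\neq0\bigr\}=\depth(I,A),
\]
valid whenever $I=(f_1,\ldots,f_\ell)$ is a proper ideal, then applies: nonemptiness of $\Sigma_\lambda$ ensures $I_\lambda\neq R_Q$. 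Since $R_Q$ is Cohen--Macaulay as a localization of the polynomial ring $R$, this depth equals $\mathrm{ht}(I_\lambda)$, which is exactly the codimension of $\Sigma_\lambda=V(I_\lambda)$ in $M$.

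The main obstacle is just bookkeeping in the first step: one must verify that the base-change and duality identifications intertwine the differentials correctly, so that the Hom complex truly becomes $(\Omega^\hdot(*\A),\omega_\lambda)$ rather than some sign- or shift-twisted variant. Beyond this, both parts rest on standard commutative-algebra machinery together with the complete-intersection property of $\Imer$ (Lemma~\ref{lem:ci}) and the Cohen--Macaulayness of $R_Q$.
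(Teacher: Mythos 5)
Your proposal is correct and follows essentially the same route as the paper: both identify $\Hom_{S_Q}(-,(R_\lambda)_Q)$ applied to the resolution of Proposition~\ref{prop:mer} with $(\Omega^\hdot(*\A),\omega_\lambda)$ via the self-duality of the Koszul complex, and then deduce the codimension statement from depth-sensitivity of Koszul cohomology together with the Cohen--Macaulayness of $R_Q$. You simply spell out the base-change and exterior-pairing identifications that the paper compresses into the phrase ``Koszul complexes are self-dual.''
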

\begin{proof} 
By Proposition~\ref{prop:mer}, the complex \eqref{eq:res} is a 
free resolution of $S_Q/\Imer$.  Now applying $\Hom_{S_Q}(-,(R_\lambda)_Q)$
to the complex \eqref{eq:res} gives $(\Omega^\hdot(*\A),\omega_\lambda)$,
since Koszul complexes are self-dual.  Taking cohomology, we obtain
\eqref{eq:torQ}.

Now $\Imer\otimes (R_\lambda)_Q$ is the vanishing ideal of $\Sigma_\la$,
and $(\Omega^\hdot(*\A),\omega_\lambda)$ its Koszul complex.  
Since $R_Q$ is Cohen-Macaulay, the codimension of the ideal 
is equal to its depth, which is the least 
$p$ for the cohomology of the Koszul complex \eqref{eq:torQ} is nonzero.
(In particular, if the critical set is 
empty, then $H^p(\Omega^\hdot(*\A), \omega_\lambda)=0$ for all $p$.) 
\end{proof}

\subsection{Logarithmic forms} 
Analogous statements hold for the complex $\Omega^\hdot(\A)$, provided that 
the arrangement $\A$ is tame.  The advantage is that, since $\A$ is a
central arrangement, $\Omega^\hdot(\A)$ is a graded $R$-module.  

Since localization is exact and $\Omega^\hdot(\A)_Q\cong \Omega^\hdot(*\A)$,
Corollary~\ref{cor:torQ} yields the following.
\begin{proposition}
For any $\lambda\in\C^n$, if $\Sigma_\lambda$ is nonempty, then the codimension 
of $\Sigma_\la$ 
is the least $p$ for which 
$$
H^p(\Omega^\hdot(\A),\omega_\lambda)_Q\neq0.
$$
\end{proposition}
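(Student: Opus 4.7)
The plan is to deduce this statement as a direct corollary of the meromorphic version (Corollary~\ref{cor:torQ}) by invoking the exactness of localization at the powers of $Q$. Concretely, the complex $(\Omega^\hdot(\A),\omega_\lambda)$ is a complex of $R$-modules with $R$-linear differential (multiplication by $\omega_\lambda$), so I can localize the entire complex at the multiplicative set generated by $Q$ and obtain the complex $(\Omega^\hdot(\A)_Q,\omega_\lambda)$.

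First I would note the identification $\Omega^p(\A)_Q\cong\Omega^p(*\A)$, which is the defining statement that inverting $Q$ kills the logarithmic pole condition; this identification is compatible with the differential (multiplication by $\omega_\lambda$) since $\omega_\lambda$ is a section of $\Omega^1(\A)$. Hence
\[
(\Omega^\hdot(\A),\omega_\lambda)_Q\cong(\Omega^\hdot(*\A),\omega_\lambda)
\]
as cochain complexes of $R_Q$-modules. Then, because localization is an exact functor on $R$-modules, it commutes with cohomology: for every $p$,
\[
H^p(\Omega^\hdot(\A),\omega_\lambda)_Q\cong H^p\bigl((\Omega^\hdot(\A),\omega_\lambda)_Q\bigr)\cong H^p(\Omega^\hdot(*\A),\omega_\lambda).
\]

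To finish, I apply Corollary~\ref{cor:torQ}: provided $\Sigma_\lambda$ is nonempty, the right-hand side is nonzero exactly when $p$ is at least the codimension of $\Sigma_\lambda$ in $M$, and the smallest such $p$ equals that codimension. Transferring the statement back through the isomorphism, the codimension of $\Sigma_\lambda$ is the least $p$ for which $H^p(\Omega^\hdot(\A),\omega_\lambda)_Q\neq0$, as claimed.

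There is essentially no obstacle here beyond bookkeeping; the substantive input has already been done in Proposition~\ref{prop:mer} and Corollary~\ref{cor:torQ}, and all that remains is the exactness of localization. Note that this argument does \emph{not} require the tame hypothesis, since we are only using the localized complex; the tame assumption becomes relevant only when one wants to drop the subscript $Q$ from the cohomology, which is pursued in the next subsection.
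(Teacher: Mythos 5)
Your argument is correct and is exactly the paper's proof: the authors dispose of this proposition in one sentence preceding its statement, citing the exactness of localization together with the identification $\Omega^\hdot(\A)_Q\cong\Omega^\hdot(*\A)$ and then invoking Corollary~\ref{cor:torQ}. Your remark that no tameness is needed here is also consistent with the paper, which introduces that hypothesis only for the unlocalized statements that follow.
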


For tame arrangements, it is possible to make a more precise analysis.
The proof of the following is deferred to the next section.
\begin{theorem}\label{th:two} 
If $\A$
is free, then the ``universal'' log-complex is a free resolution of
$(S/I)(n-\ell)$ as a graded $S$-module.  More generally,
for any tame arrangement $\A$, the complex
\begin{equation}\label{eq:logcplx}
0\rightarrow\Omega^0_{S/\kk}(\A)
\stackrel{\omega_\b{a}}{\rightarrow}\Omega^1_{S/\kk}(\A)
\stackrel{\omega_\b{a}}{\rightarrow}\cdots
\stackrel{\omega_\b{a}}{\rightarrow} \Omega^\ell_{S/\kk}(\A)\rightarrow
(S/I)(n-\ell)\rightarrow0 
\end{equation}
is exact. 
\end{theorem}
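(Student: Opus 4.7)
The plan is to handle the free case as a clean Koszul complex argument, then bootstrap to the tame case by combining the meromorphic exactness of Proposition~\ref{prop:mer} with depth bounds supplied by the tame hypothesis.

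For the free case, $\derA$ has a homogeneous $R$-basis $D_1,\dots,D_\ell$, so $\der_\kk(\A) \cong \derA\otimes_R S$ is a free $S$-module; by the self-duality of logarithmic forms (the lemma in \S\ref{subsec:forms}), $\Omega^1_{S/\kk}(\A)$ is free as well, and $\Omega^p_{S/\kk}(\A)\cong\bigwedge^p \Omega^1_{S/\kk}(\A)$ is free of rank $\binom{\ell}{p}$. Under this identification, multiplication by $\omega_{\b a}$ is exactly the Koszul differential on the sequence $(\ip{D_1,\omega_{\b a}},\dots,\ip{D_\ell,\omega_{\b a}})$, whose entries generate $I$ by \eqref{eq:defI3}. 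Since $I$ is a complete intersection of codimension $\ell$ in the Cohen-Macaulay ring $S$ (Theorem~\ref{th:ci} and Corollary~\ref{cor:closure}), this sequence is regular, the Koszul complex is acyclic, and the augmentation to $(S/I)(n-\ell)$ is a free resolution.

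For general tame $\A$, the last position goes through identically: $\Omega^\ell_{S/\kk}(\A)\cong S(n-\ell)$, and under the duality $\Omega^{\ell-1}(\A)\cong\derA(n-\ell)$ the map $\omega_{\b a}$ has image $I(n-\ell)$, so the cokernel at the end is $(S/I)(n-\ell)$. To obtain $H^i=0$ for $i<\ell$, I would first invert $Q$: Proposition~\ref{prop:mer} gives an exact complex, so every $H^i$ is $Q$-power torsion. Second, I would invoke the tame hypothesis through Auslander-Buchsbaum to obtain $\depth_R \Omega^p(\A)\geq \ell-p$, which upon flat base change from $R$ to $S$ becomes $\depth_S \Omega^p_{S/\kk}(\A)\geq n+\ell-p$. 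Then, replacing each term of the log complex by a minimal $S$-free resolution of length $\leq p$, assembling a Cartan-Eilenberg double complex lifting $\omega_{\b a}$, and applying the Buchsbaum-Eisenbud acyclicity criterion to the total complex, the rank data (from the Euler characteristic, matched against the meromorphic complex after inverting $Q$) together with the depth bounds should force the total complex to be acyclic away from position $\ell$.

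The main obstacle is this last step. Buchsbaum-Eisenbud demands free modules in every position, while tameness only bounds projective dimensions, so the passage through a Cartan-Eilenberg double complex is needed as an intermediate; verifying that the lift of $\omega_{\b a}$ to a genuine double-complex differential exists and is compatibly behaved, and that the associated spectral sequence collapses quickly enough to read off cohomology, is the delicate point. An alternative, potentially cleaner route would be induction on $\ell$ via localization at a generic point of a codimension-$(\ell-1)$ flat $X$, where $\A_X$ is automatically free (rank-$\leq 2$ arrangements are free), so the free case already proved handles the localized complex and reduces the vanishing of $H^i$ for $i<\ell$ to a statement about strictly lower-dimensional arrangements.
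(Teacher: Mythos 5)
Your free case is correct: identifying $\Omega^p_{S/\kk}(\A)$ with $\bigwedge^p\Omega^1_{S/\kk}(\A)$, the differential $\omega_{\b a}$ becomes the Koszul differential on the generators $\ip{D_i,\omega_{\b a}}$ of $I$, and these form a regular sequence because $I$ has codimension $\ell$ in the Cohen--Macaulay ring $S$. This is consistent with the paper, which treats the free case as immediate for exactly this reason.

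The tame case has a genuine gap, precisely at the step you flag as delicate, and for two reasons. First, the Buchsbaum--Eisenbud criterion applied to the totalization of a Cartan--Eilenberg resolution requires lower bounds on the depths of the ideals of minors of the lifted differentials; neither the rank count (which only sees the complex after inverting $Q$) nor the bounds $\depth_R\Omega^p(\A)\geq\ell-p$ coming from tameness give any control over those determinantal ideals, so the criterion cannot be verified. Second, and more fundamentally, the two inputs you propose to combine are not jointly sufficient. Any acyclicity-lemma argument needs the putative homology $H^q$ for $q<\ell$ to be supported on a locus small enough for the depth bounds to kill it. Inverting $Q$ only places $H^q$ on $\bigl(\bigcup_j H_j\bigr)\times\C^n$, which has dimension $n+\ell-1$; what is actually needed is that $H^q$ is annihilated by a power of $R_+$, i.e.\ supported over the origin of $V$. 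The paper obtains this by a separate induction on the number of hyperplanes: after localizing at any maximal ideal $\m$ of $R$ with $X(\m)\neq0$, some $f_n$ is a unit, and the substitution $a_i\mapsto a_i-c_if_ia_n/f_n$ identifies the localized complex with that of the deletion $\A'$; only for the remaining prime $R_+$ does tameness enter, via the two hypercohomology spectral sequences for $H^\hdot_{R_+}(\Omega^\hdot)$, to force $H^0_{R_+}(H^q)=0$ for the least nonzero $q<\ell$. Your proposed alternative does not replace this: localizing at a generic point of a flat $X$ replaces $\A$ by $\A_X$, which is automatically free only when $X$ has codimension at most $2$, so that route controls the homology only in codimension $\leq2$ and says nothing about the deeper strata --- in particular nothing about the origin, which is where the tame hypothesis is actually needed.
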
 

By analogy with Corollary~\ref{cor:torQ}, we have the following.
\begin{corollary}
Suppose $\A$ is tame.  
For any $\lambda\in\C^n$ and $0\leq p\leq \ell$, then
\begin{equation}\label{eq:tor}
H^p(\Omega^\hdot(\A),\omega_\lambda)\cong\tor^S_{\ell-p}(S/I,R_\lambda)(n-\ell),
\end{equation}
where $R_\lambda$ is the specialization from
Definition~\ref{def:Rlambda}.
\end{corollary}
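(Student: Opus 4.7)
The plan is to extract the Tor isomorphism from Theorem~\ref{th:two} by a standard hyperhomology argument. Since the theorem presents the log complex as a (possibly non-free) resolution of $(S/I)(n-\ell)$, the goal is to show that the modules $\Omega^p_{S/\kk}(\A)$ are acyclic for the specialization functor $-\otimes_S R_\lambda$, so that $\tor^S_\bullet((S/I)(n-\ell),R_\lambda)$ can be read off directly from $(\Omega^\hdot(\A),\omega_\lambda)$.

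Concretely, first I would let $K^\hdot$ denote the Koszul resolution of $R_\lambda=S/(a_1-\lambda_1,\ldots,a_n-\lambda_n)$ over $S$, which is a free $S$-resolution because $a_1-\lambda_1,\ldots,a_n-\lambda_n$ is a regular sequence in $S$. Then I would form the first-quadrant double complex $C^{p,q}=\Omega^p_{S/\kk}(\A)\otimes_S K^{-q}$, with horizontal differential induced by $\omega_\b{a}$ and vertical differential inherited from $K^\hdot$.  Both associated spectral sequences converge to the cohomology of the total complex.

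Filtering by columns first exploits the $S$-freeness of each $K^{-q}$ together with Theorem~\ref{th:two}: the horizontal cohomology vanishes for $p\neq\ell$, and at $p=\ell$ it equals $(S/I)(n-\ell)\otimes_S K^{-q}$. Taking vertical cohomology at the next page then yields $\tor^S_{\ell-p}((S/I)(n-\ell),R_\lambda)$ in total degree $p$. Filtering by rows first, I would use that $\omega_\b{a}\otimes_S R_\lambda=\omega_\lambda$, so a collapse of this spectral sequence onto its bottom row would deliver $H^p(\Omega^\hdot(\A),\omega_\lambda)$, provided the higher Tors $\tor^S_j(\Omega^p_{S/\kk}(\A),R_\lambda)$ vanish for $j>0$.

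The only content beyond bookkeeping is this Tor-vanishing, which I expect to be the main step. I would verify it by observing that $\Omega^p_{S/\kk}(\A)=\Omega^p(\A)\otimes_\C\C[a_1,\ldots,a_n]$ is a polynomial extension of $\Omega^p(\A)$. An induction on $n$ using the leading-coefficient argument on the polynomial variables shows that $a_1-\lambda_1,\ldots,a_n-\lambda_n$ remains a regular sequence on this module, so the Koszul complex $\Omega^p_{S/\kk}(\A)\otimes_S K^\hdot$ is exact in positive homological degree. Comparing the two spectral-sequence limits then gives the stated isomorphism, with the grading shift $(n-\ell)$ absorbed directly from $(S/I)(n-\ell)$.
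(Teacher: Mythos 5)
Your argument is correct and is essentially the paper's proof: the two filtrations of your double complex $\Omega^\hdot_{S/\kk}(\A)\otimes_S K^\hdot$ are exactly the two hyper-Tor spectral sequences the paper invokes, with the first degenerating by the exactness of \eqref{eq:logcplx} and the second by the vanishing of $\tor^S_j(\Omega^p_{S/\kk}(\A),R_\lambda)$ for $j>0$. Your regular-sequence verification of that vanishing (via $\Omega^p_{S/\kk}(\A)\cong\Omega^p_{R/\C}(\A)\otimes_\C\kk$) is the same fact the paper cites as flat base change.
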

\begin{proof}
If $\A$ is free, the statement follows directly.  More generally,
since the complex \eqref{eq:logcplx} is exact, the first hyper-Tor
spectral sequence degenerates:
\begin{equation}\label{eq:hypertor}
\tor^S_\hdot(S/I,M)(n-\ell)\cong 
\mathbf{Tor}^S_\hdot(\Omega^{\ell-\hdot}_{S/\kk}(\A),M),
\end{equation}
for any $S$-module $M$.  Since, we also have
$\tor^S_q(\Omega^p_{S/\kk}(\A),R_\lambda)=
\tor^R_q(\Omega^p_{R/\C}(\A),R)=0$ for $q>0$, by flat base change,
the second hyper-Tor spectral sequence also degenerates, giving the
isomorphism claimed.
\end{proof}

Recall that a complete intersection is an example of a Cohen-Macaulay
ring, for which we refer to \cite{eisenbook}.  Theorem~\ref{th:ci} can
be extended as follows.  
\begin{theorem}\label{thm:CM}
If $\A$ is a tame arrangement, then the affine coordinate ring $S/I$ 
of $\OSigma$ is Cohen-Macaulay.
\end{theorem}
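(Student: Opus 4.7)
The plan is to use Theorem~\ref{th:two} to bound the projective dimension of $S/I$ over $S$, and then invoke the Auslander--Buchsbaum formula to conclude that ${\rm depth}_S(S/I) = \dim(S/I)$.

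First, since $S = \kk \otimes_\C R$ is a free (hence flat) extension of $R$, we have $\Omega^p_{S/\kk}(\A) \cong S \otimes_R \Omega^p(\A)$.  Base change along a flat ring map turns a projective $R$-resolution into a projective $S$-resolution of the same length, so the tame hypothesis (Definition~\ref{def:tame}) gives
$$
{\rm pd}_S\, \Omega^p_{S/\kk}(\A) \;=\; {\rm pd}_R\, \Omega^p(\A) \;\leq\; p
$$
for each $p$ with $0 \leq p \leq \ell$.

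Next, I would break the exact sequence of Theorem~\ref{th:two} into short exact sequences
$$
0 \to K_p \to \Omega^p_{S/\kk}(\A) \to K_{p+1} \to 0,
$$
where $K_p$ is the image of $\omega_\b{a}\colon \Omega^{p-1}_{S/\kk}(\A) \to \Omega^p_{S/\kk}(\A)$, with the conventions $K_0 = 0$ and $K_{\ell+1} = (S/I)(n-\ell)$. Applying the standard inequality ${\rm pd}(C) \leq \max({\rm pd}(B),\, {\rm pd}(A)+1)$ to each short exact sequence, an induction on $p$ yields ${\rm pd}_S K_p \leq p-1$ for $1 \leq p \leq \ell$, and then at the last step ${\rm pd}_S(S/I) \leq \ell$.

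Finally, $S$ is a polynomial ring in $n+\ell$ variables, so Auslander--Buchsbaum gives
$$
{\rm depth}_S(S/I) \;=\; (n+\ell) - {\rm pd}_S(S/I) \;\geq\; n.
$$
On the other hand, Corollary~\ref{cor:closure} shows that $I$ has codimension $\ell$, hence $\dim(S/I) = n$; since depth never exceeds dimension, equality must hold, and $S/I$ is Cohen--Macaulay. The substantive input is Theorem~\ref{th:two}; the only thing requiring care here is tracking the projective-dimension bounds through a non-free resolution, which is routine homological bookkeeping.
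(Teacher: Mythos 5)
Your proof is correct and rests on exactly the same two pillars as the paper's: the exact resolution of $(S/I)(n-\ell)$ by the log complex (Theorem~\ref{th:two}) together with the tameness bound ${\rm pd}_S\,\Omega^p_{S/\kk}(\A)\leq p$, yielding ${\rm pd}_S(S/I)\leq\ell$ and hence Cohen--Macaulayness via Auslander--Buchsbaum and the codimension count of Corollary~\ref{cor:closure}. The only difference is presentational: the paper extracts the bound from the hyper-Tor spectral sequence $E^1_{pq}=\tor_q^S(\Omega^{\ell-p}_{S/\kk}(\A),\C)$ of that resolution, whereas you unpack the same computation into short exact sequences and dimension shifting, which is a legitimate (and arguably more transparent) way to say the same thing.
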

Note that the coordinate ring $S/I$ is not Cohen-Macaulay for all arrangements 
(Example~\ref{ex:ER}).
\begin{proof}
Since $\OSigma$ has codimension $\ell$ (Corollary~\ref{cor:closure}),
the depth of $I$ is $\ell$.  It follows that ${\rm pd}_S(S/I)\geq\ell$.
The ring $S/I$ is Cohen-Macaulay if and only if this is an equality.
Using the isomorphism \eqref{eq:hypertor} for $M=\C$, we have a 
spectral sequence
$$
E^1_{pq}=\tor_q^S(\Omega^{\ell-p}_{S/\kk}(\A),\C)\Rightarrow
\tor^S_{p+q}(S/I,\C)(n-\ell).
$$
The tame hypothesis is equivalent to having $E^1_{pq}=0$ for $p+q>\ell$.  
It follows that
${\rm pd}_S(S/I)\leq \ell$, as required.
\end{proof}
This allows a sharpening of Theorem~\ref{thm:closure}.
\begin{corollary}\label{cor:Iprime}
If $\A$ is a tame arrangement, then $I$ is the vanishing ideal of $\OSigma$.
In particular, $I$ is prime.
\end{corollary}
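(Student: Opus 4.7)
The strategy is to combine the Cohen--Macaulay property of $S/I$ given by Theorem~\ref{thm:CM} with the fact---already noted in the proof of Corollary~\ref{cor:closure}---that $\rad(I)$ is prime. The key algebraic principle is that a Cohen--Macaulay quotient of a polynomial ring has no embedded associated primes.

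First I would invoke Theorem~\ref{thm:CM} to conclude that $S/I$ is Cohen--Macaulay, from which (see, e.g., \cite[Cor.~18.10]{eisenbook}) every prime in $\mathrm{Ass}(S/I)$ has height equal to $\mathrm{codim}(I)=\ell$. By Corollary~\ref{cor:closure}, $\rad(I)$ is prime of height $\ell$, so it is the unique minimal prime over $I$. Combining these facts gives $\mathrm{Ass}(S/I)=\{\rad(I)\}$; equivalently, $I$ is $\rad(I)$-primary.

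Next, because $\OSigma=V(I)$ is the closure of $\Sigma\subseteq M\times\C^n$ (Theorem~\ref{thm:closure}) and $Q$ is nowhere zero on $M$, the polynomial $Q$ does not lie in $\rad(I)$. For a primary ideal this means $Q$ acts as a non-zerodivisor on $S/I$, so $(I:Q^\infty)=I$. On the other hand, as noted in the proof of Corollary~\ref{cor:closure}, $\rad(I)$ equals the contraction of $\Imer=I_Q$ to $S$, and this contraction is precisely $(I:Q^\infty)$. Hence $I=\rad(I)$, which by Theorem~\ref{thm:closure} is the vanishing ideal of $\OSigma$; primality is immediate, since $\Imer$ is prime by Proposition~\ref{prop:univ} and contractions of prime ideals are prime.

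The step that will need the most care is the unmixedness deduction: Theorem~\ref{thm:CM} is stated in terms of projective dimension, so one should explicitly invoke Auslander--Buchsbaum at suitable localizations of the graded polynomial ring $S$ to conclude that every associated prime of $I$ has height $\ell$. This passage is routine for finitely generated graded modules over a polynomial ring, but it is the one substantive commutative-algebra input beyond what has already been established in the paper.
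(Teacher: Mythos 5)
Your proposal is correct and follows essentially the same route as the paper's proof: Cohen--Macaulayness of $S/I$ (Theorem~\ref{thm:CM}) gives unmixedness, hence $I$ is $\rad(I)$-primary; since $Q\notin\rad(I)$, the ideal $I$ equals its saturation with respect to $Q$, which is the contraction of the prime ideal $\Imer$, so $I=\rad(I)$. The only difference is that you spell out the unmixedness step (via \cite[Cor.~18.10]{eisenbook} and Auslander--Buchsbaum) that the paper leaves implicit.
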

\begin{proof}
By Theorem~\ref{thm:closure}, the vanishing ideal of $\OSigma$ is 
$\rad(I)$.  This is prime, by Corollary~\ref{cor:closure}.
Suppose $\A$ is tame.  By Theorem~\ref{thm:CM}, the ideal $I$ has no
embedded primes, so $I$ is primary.  Since $Q\not\in\rad(I)$, it follows
$(I:Q)=I$.  Then $I$ is the contraction of $I_Q=\Imer$ under the inclusion
$S\hookrightarrow S_Q$, so $I=\rad(I)$.
\end{proof}

Now suppose $\Phi_\lambda$ is a master function with
$\omega_\lambda=\d\log\Phi_\lambda$.  Then
$S/I\otimes_S R_\lambda = R/I_\lambda$, where $I_\lambda$ is
the ideal generated by $\ip{\derA,\omega_\lambda}$.  Let
\begin{equation}\label{eq:OSigma}
\OSigma_\lambda = V(I_\lambda) \subseteq\C^\ell.
\end{equation}
Clearly $\OSigma_\lambda\cap M=\Sigma_\lambda$.  However,
it is not the case in general that $\OSigma_\lambda$ is the
closure of $\Sigma_\lambda$ (see Example~\ref{ex:monomial deletions}.)
The next result prepares for our main Theorem~\ref{thm:res2crit}.
\begin{proposition}\label{prop:tamecodim}
If $\A$ is a tame arrangement, then
the codimension of $\OSigma_\lambda$ 
is the smallest $p$ for which $H^p(\Omega^\hdot(\A),\omega_\lambda)\neq0$,
provided that either $\A$ is free, $\A$ has rank $3$, or $p\leq2$.
\end{proposition}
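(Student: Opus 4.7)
The plan is to combine the isomorphism $H^p(\Omega^\hdot(\A),\omega_\la)\cong\tor^S_{\ell-p}(S/I,R_\la)(n-\ell)$ from the corollary to Theorem~\ref{th:two} with the Cohen-Macaulay property of $S/I$ established in Theorem~\ref{thm:CM}, reducing the problem to depth-sensitivity of a Koszul complex.

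Since $R_\la=S/(a_i-\la_i\colon 1\le i\le n)$ and the linear forms $a_i-\la_i$ form a regular sequence in $S$, the corresponding Koszul complex is a free $S$-resolution of $R_\la$, so $\tor^S_q(S/I,R_\la)$ is computed by the Koszul homology of $(a_1-\la_1,\ldots,a_n-\la_n)$ on $S/I$. Assuming $V(I_\la)\neq\emptyset$ (so that the ideal $J=(a_i-\la_i)$ satisfies $J(S/I)\neq S/I$), the classical depth-sensitivity theorem identifies the largest $q$ with $H_q\neq 0$ as $n-g$, where $g$ denotes the grade of $J$ on $S/I$. Cohen-Macaulayness of $S/I$ forces $g=\dim(S/I)-\dim R/I_\la=n-(\ell-c)$, where $c=\mathrm{codim}_V\OSigma_\la$. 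Thus the top nonvanishing Tor occurs at degree $\ell-c$, which through the isomorphism translates into $H^p(\Omega^\hdot(\A),\omega_\la)=0$ for $p<c$ together with $H^c(\Omega^\hdot(\A),\omega_\la)\neq 0$, as required.

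The main obstacle is justifying why the three listed hypotheses are needed to run this argument cleanly. In the free case, $I$ is a complete intersection by Theorem~\ref{th:ci} and the logarithmic complex is literally a Koszul complex on a regular sequence, so the conclusion follows from classical Koszul theory without further input. For rank-$3$ arrangements and for $p\le 2$, the subtlety is that the depth-sensitivity statement above is cleanest at primes where $\Omega^1(\A)$ is locally free (see \S\ref{ss:local}), and local freeness can fail only in codimension at least $3$: rank $3$ pushes any potential failure to the irrelevant ideal alone, while the restriction $p\le 2$ ensures that the locus of critical sets under scrutiny sits away from any possible non-local-freeness obstruction. Carefully verifying that the grade of $J$ on $S/I$ really does compute the geometric codimension of $\OSigma_\la$ in $V$ within these three regimes, and pinpointing what can go wrong outside them, is the delicate part of the argument.
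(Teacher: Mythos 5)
Your argument is correct, but it is genuinely different from the paper's, and in fact it proves more than the stated proposition. The paper identifies $\mathrm{codim}\,\OSigma_\lambda$ with the grade of $I$ on $R_\lambda$, i.e.\ the least $p$ with $\Ext^p_S(S/I,R_\lambda)\neq0$, and then runs the hyper-$\Ext$ spectral sequence of the resolution \eqref{eq:logcplx}. Because that resolution is not free, the terms $E_1^{pq}=\Ext^q_S(\Omega^{\ell-p}_{S/\kk}(\A),R_\lambda)$ with $q>0$ need not vanish, and the identification $E_2^{p0}=E_\infty^{p0}$ is only forced when $\A$ is free or when $p\le 2$ (using that $\Omega^\ell(\A)$ is free); this is the \emph{only} place the three extra hypotheses enter. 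Your route instead uses the $\tor$ identification \eqref{eq:tor}, whose spectral sequence degenerates completely by flat base change, and then reads off the codimension from the \emph{top} nonvanishing Koszul homology of $(a_1-\la_1,\dots,a_n-\la_n)$ on $S/I$ via depth-sensitivity, using Theorem~\ref{thm:CM}. Since $\tor^S_{\ell-p}$ governs $H^p$, the top nonvanishing $\tor$ corresponds to the least nonvanishing $p$, so the awkward low-degree restriction never arises: your computation $g=n-(\ell-c)$, hence top $\tor$ in degree $\ell-c$, is valid for every tame arrangement (the dimension formula $\mathrm{ht}(\mathfrak{a})+\dim(B/\mathfrak{a})=\dim B$ in $B=S/I$ is justified because $I$ is prime for tame $\A$ by Corollary~\ref{cor:Iprime}, so $B$ is a Cohen--Macaulay affine domain). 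Consequently your final paragraph is chasing a phantom: the hypotheses ``free, rank $3$, or $p\le2$'' are an artifact of the $\Ext$-based proof, not of the statement, and nothing about local freeness of $\Omega^1(\A)$ in codimension $\le 2$ is needed in your argument. The only genuine caveats are the one you already flag ($\OSigma_\lambda\neq\emptyset$, which both proofs tacitly assume, since otherwise all the cohomology groups vanish and the statement is vacuous) and the free-case aside, where the paper's Koszul argument and yours coincide. In short: your proof is sound and strictly stronger, establishing the conclusion for all tame arrangements.
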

\begin{proof}
The codimension of $\OSigma_\lambda$ is equal to the depth of $I_\lambda$,
or equivalently the depth of $I$ on $R_\lambda$, 
which is the least $p$ for which 
$\Ext_S^p(S/I,R_\lambda)\neq0$.  By the same argument as in
\eqref{eq:hypertor}, using Theorem~\ref{th:two} with
hyper-Ext gives a spectral sequence
$$
E_1^{pq}=\Ext^q_S(\Omega_{S/\kk}^{\ell-p}(\A),R_\lambda)
\Rightarrow \Ext_S^{p+q}(S/I,R_\lambda),
$$
suppressing the degree shift.
Then $E^{pq}_1\cong\Ext_R^q(\Omega_{R/\C}^{\ell-p}(\A),R)$, and in particular
$E^{p0}_1\cong \Omega_{R/\C}^p(\A)$ by self-duality.  Consequently,
$E^{p0}_2\cong H^p(\Omega^\hdot(\A),\omega_\lambda)$.  If $\A$ is free,
$E^{pq}_1=0$ for $q>0$, and the conclusion follows.

In general, since $\Omega^\ell$
is a free module, $E^{0q}_1=0$ for $q>0$, which means
$E_2^{p0}=E_\infty^{p0}$ for $p\leq 2$.  That is,
\begin{equation}\label{eq:extiso}
\Ext^p_S(S/I,R_\lambda)\cong H^p(\Omega^\hdot(\A),\omega_\lambda)
\end{equation}
for $0\leq p\leq 2$.  The claims for rank-$3$ arrangements
and codimensions $p\leq2$ follow.
\end{proof}

\subsection{Proof of Theorem~\ref{th:two}}\label{ss:pftwo}
The purpose of this section is to show that the complex \eqref{eq:logcplx}
is exact if the arrangement $\A$ is tame.  We begin with a reduction
to irreducible arrangements.  Suppose $\A=\A_1\oplus \A_2$, and let
$S_j$ and $\kk_j$ be corresponding coordinate rings for $j=1,2$, and
let $\omega_{{\b a}_j}\in\Omega^1_{S_j/\kk_j}(\A_j)$ as defined in
\eqref{eq:omega_a1}.  The following lemma is routine and we omit the
proof: a similar result appears as \cite[Proposition 4.14]{ot}.
\begin{lemma}\label{lem:decomp}
If $\A=\A_1\oplus \A_2$, the decomposition induces an isomorphism of
cochain complexes
$$
(\Omega^\hdot_{S/\kk}(\A),\omega_{\b a})\cong
(\Omega^\hdot_{S_1/\kk_1}(\A_1),\omega_{{\b a}_1})
\otimes_\C
(\Omega^\hdot_{S_2/\kk_2}(\A_2),\omega_{{\b a}_2}).
$$
Moreover, $\Sigma(\A)\cong\Sigma(\A_1)\times\Sigma(\A_2)$ and
$I(\A)=S_2I(\A_1)+S_1I(\A_2)$.
\end{lemma}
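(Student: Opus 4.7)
The plan is to unwind all three claims from the direct-sum decomposition $V=V_1\oplus V_2$ and observe that the logarithmic apparatus splits cleanly in each tensor factor. First I would record the ring-theoretic decompositions: since the hyperplane equations $f_i$ partition into two subsets according to the parts $P_1,P_2$, we have $R\cong R_1\otimes_\C R_2$, $\kk\cong\kk_1\otimes_\C\kk_2$, and hence $S\cong S_1\otimes_\C S_2$, with $Q=Q_1Q_2$. This gives the starting point for any factorization argument.

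Next I would establish the module-level decomposition of logarithmic forms. The analogue of \cite[Proposition~4.14]{ot} for derivations, together with the self-duality of $\Omega^\hdot(\A)$ from the earlier lemma, yields
\[
\Omega^p_{S/\kk}(\A)\;\cong\;\bigoplus_{i+j=p}\Omega^i_{S_1/\kk_1}(\A_1)\otimes_\C\Omega^j_{S_2/\kk_2}(\A_2),
\]
compatible with exterior multiplication. The one-form $\omega_{\b a}$ defined in \eqref{eq:omega_a1} decomposes as $\omega_{\b a}=\omega_{{\b a}_1}\otimes 1+1\otimes\omega_{{\b a}_2}$ under this isomorphism, since each summand $a_i\,df_i/f_i$ lives in exactly one tensor factor. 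Left multiplication by $\omega_{\b a}$ therefore acts as $\omega_{{\b a}_1}\wedge\eta_1\otimes\eta_2+(-1)^{|\eta_1|}\eta_1\otimes\omega_{{\b a}_2}\wedge\eta_2$, which is precisely the tensor product of the two ``multiplication by $\omega$'' differentials in the sense of cochain complexes.

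For the remaining two claims, I would use Proposition~\ref{prop:univ2}: writing $W_j=\{\mu\in\C^{|P_j|}\colon\sum_{i\in P_j}\mu_if_i=0\}$, the trivialization $\Sigma(\A_j)\cong(M(\A_j)\cap V_j)\times W_j$ combines with $M(\A)=M(\A_1)\times M(\A_2)$ and $W=W_1\oplus W_2$ (a consequence of $V_1\cap V_2=0$) to give $\Sigma(\A)\cong\Sigma(\A_1)\times\Sigma(\A_2)$. For the ideal identity, the derivations decompose as $\der_\kk(\A)\cong\bigl(\der_{\kk_1}(\A_1)\otimes_{\kk_1}S\bigr)\oplus\bigl(S\otimes_{\kk_2}\der_{\kk_2}(\A_2)\bigr)$; pairing with $\omega_{\b a}=\omega_{{\b a}_1}+\omega_{{\b a}_2}$ kills the cross terms because a derivation in the $V_1$-direction annihilates functions and forms pulled back from $V_2$, producing generators for $S_2I(\A_1)$ and $S_1I(\A_2)$ respectively. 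Containment in both directions follows immediately.

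I do not expect any serious obstacle here: each of the three assertions is a direct verification once the correct product structure has been written down. The only care needed is the sign convention in the tensor product of differentials in the first part, and the confirmation that $\kk$ (which merely keeps track of weights on disjoint index sets $P_1,P_2$) also splits as a tensor product, so that everything takes place in the symmetric monoidal category of graded commutative $\C$-algebras.
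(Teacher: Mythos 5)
Your proof is correct and is exactly what the paper intends: the authors omit the argument as routine, pointing to \cite[Proposition~4.14]{ot} for the product decomposition of the logarithmic modules, which is precisely the decomposition you carry out, together with the standard Koszul sign rule for the tensor-product differential and the splitting $\omega_{\b a}=\omega_{{\b a}_1}+\omega_{{\b a}_2}$. One minor notational slip: the base change of the derivation modules should be over the polynomial rings, i.e.\ $S\otimes_{S_1}\der_{\kk_1}(\A_1)$ and $S\otimes_{S_2}\der_{\kk_2}(\A_2)$ rather than tensoring over $\kk_1$, $\kk_2$; this does not affect the argument.
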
  
 
We now argue induction on the number of hyperplanes.  Clearly if $\abs{\A}=1$,
the arrangement is free, and \eqref{eq:logcplx} is exact.  We may 
assume $\A$ is irreducible: if not, by Lemma~\ref{lem:decomp},
the complex \eqref{eq:logcplx} decomposes as a tensor product of
complexes for arrangements with strictly fewer hyperplanes.  By induction
and the K\"unneth formula, then, \eqref{eq:logcplx} is exact.

Since an $S$-module $N$ is zero if and only if $N_\m=0$ for all maximal ideals
$\m$, we wish to show that
the localization of \eqref{eq:logcplx} is exact for every $\m$.  It is 
enough to consider
just those ideals $\p=S\m$, for maximal ideals $\m$ of $R$.

Let $X=X(\m)$, in the notation of \S\ref{ss:local}.  
First, consider the case where $X\neq0$.  By 
assumption, $\A$ is essential, so some hyperplane does not contain $X$.
Without loss of generality, assume $X\not\subseteq\ker f_n$, and let $\A'$ 
denote the arrangement obtained from $\A$ by deleting the last hyperplane.

Since $\A$ is irreducible, $\A'$ is an essential arrangement in $V$.
Let $\kk'=\C[a_i\colon 1\leq i\leq n-1]$ and $S'=\kk'\otimes_\C R$.
Similarly, let $\omega_{{\b a}'}=\sum_{i=1}^{n-1}a_i\d f_i/f_i$.
Consider, for $0\leq p\leq\ell$, the inclusion
$$
i\colon \Omega^p_{S'/\kk'}(\A')\otimes_\C\C[a_n]
\hookrightarrow\Omega^p_{S/\kk}(\A).
$$
Since $f_n$ is a unit in $S_\p$, the map $i$ localizes to an isomorphism
of $S_\p$-modules.  Since $\A$ is irreducible, we may write
$
f_n=\sum_{i=1}^{n-1}c_i f_i
$
for some scalars $c_i\in\C$.

Then we have an isomorphism of cochain complexes
\begin{eqnarray}\nonumber
(\Omega^\hdot_{S/\kk}(\A),\omega_{\b a})_\p &\cong &
(\Omega^\hdot_{S'/\kk'}(\A')\otimes_\C \C[a_n],\omega_{{\b a}'}+
a_n\frac{\d f_n}{f_n})_\p\\
&\cong &
(\Omega^\hdot_{S'/\kk'}(\A')\otimes_\C \C[a_n],\eta)_\p,\label{eq:twisted}
\end{eqnarray}
where the differential is given by multiplication by 
$$
\eta=\sum_{i=1}^n (a_i+\frac{c_i f_i a_n}{f_n})\frac{\d f_i}{f_i}.
$$
Define a homomorphism $\phi\colon S_{f_n}\to S_{f_n}$ 
by setting
$\phi(x_i)=x_i$ for $1\leq i\leq \ell$ and $\phi(a_i)=a_i-c_if_ia_n/f_n$
for $1\leq i\leq n-1$, and $\phi(a_n)=a_n$.  Note $\phi$ is an isomorphism,
so it localizes to an isomorphism of local rings $S_\p\to S_{\p'}$, where
$\p'=(\phi^{-1})^*(\p)$.

By construction,
$\phi$ induces an isomorphism on forms with $\phi(\eta)=\omega_{{\b a}'}$,
taking \eqref{eq:twisted} to the cochain complex
$$
(\Omega^\hdot_{S'/\kk'}(\A')\otimes_\C \C[a_n],\omega_{{\b a}'})_{\p'}.
$$
The cohomology of this complex is concentrated in top degree, by induction,
so the complex \eqref{eq:logcplx} is exact at $\m$: in fact, we have
\begin{equation}\label{eq:localiso}
\big(S/\Ilog(\A)\big)_\p\cong\big(S/S\Ilog(\A'))_{\p'}.
\end{equation}

It remains to consider the case where $X=0$, i.e., $\m=R_+$.  
From the previous argument, \eqref{eq:logcplx} is
exact at all other maximal primes, so some power
of $R_+$ annihilates $H^q:=H^q(\Omega^\hdot_{S/\kk}(\A),\omega_{\b a})$, 
for $0\leq q<\ell$.  The following lemma shows that power must be zero,
completing the proof of exactness.
\begin{lemma}
Suppose $\A$ is a tame arrangement. Let $q$ be the least integer
for which $H^q\neq0$.  If $q<\ell$, then $H^q$ is $R_+$-saturated.
\end{lemma}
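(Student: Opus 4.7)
The plan is to exploit the two hypercohomology spectral sequences for the $R_+$-torsion functor $\Gamma_{R_+}(-)$ applied to the complex $\mathcal{C}^\hdot:=(\Omega^\hdot_{S/\kk}(\A),\omega_{\b a})$. Write $\mathbb{H}^n$ for the total hypercohomology and abbreviate $H^j:=H^j(\mathcal{C}^\hdot)$; the goal is to show $H^0_{R_+}(H^q)=0$, which is exactly the saturation assertion.

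The tame hypothesis gives ${\rm pd}_R\Omega^p(\A)\le p$, so by the Auslander-Buchsbaum formula $\depth_{R_+}\Omega^p(\A)\ge \ell-p$. Since $S=R\otimes_\C \kk$ is a free $R$-module, local cohomology commutes with the base change and
\[
H^j_{R_+}(\Omega^p_{S/\kk}(\A))=H^j_{R_+}(\Omega^p(\A))\otimes_\C \kk=0 \qquad\text{for } j<\ell-p.
\]
The ``column'' spectral sequence
\[
E_1^{p,j}=H^j_{R_+}(\Omega^p_{S/\kk}(\A))\Rightarrow\mathbb{H}^{p+j}
\]
therefore vanishes whenever $p+j<\ell$, so $\mathbb{H}^n=0$ for all $n<\ell$.

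Now apply the ``row'' spectral sequence
\[
E_2^{p,j}=H^p_{R_+}(H^j)\Rightarrow\mathbb{H}^{p+j}.
\]
Minimality of $q$ forces $E_2^{r,q-r+1}=H^r_{R_+}(H^{q-r+1})=0$ for every $r\ge 2$, and $E_r^{-r,q+r-1}=0$ for column-index reasons, so no higher differential enters or leaves the position $(0,q)$. Consequently $H^0_{R_+}(H^q)=E_2^{0,q}=E_\infty^{0,q}$, which is a subquotient of $\mathbb{H}^q$. Since $q<\ell$, we have $\mathbb{H}^q=0$, so $H^0_{R_+}(H^q)=0$ as desired. The only real subtlety is the flat-base-change identification for $H^\hdot_{R_+}$; the rest is bookkeeping for two spectral sequences sharing a common abutment.
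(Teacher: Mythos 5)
Your argument is correct and is essentially the paper's own proof: both use the two hypercohomology spectral sequences for $\Gamma_{R_+}$ applied to $(\Omega^\hdot_{S/\kk}(\A),\omega_{\b a})$, derive $H^j_{R_+}(\Omega^p)=0$ for $j<\ell-p$ from tameness to kill ${\mathbb H}^k_{R_+}$ for $k<\ell$, and then read off $H^0_{R_+}(H^q)=E_2^{0,q}=E_\infty^{0,q}=0$ from the second spectral sequence using minimality of $q$. Your write-up merely makes explicit the Auslander--Buchsbaum and flat base change steps that the paper leaves implicit.
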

\begin{proof}
Equivalently, we wish to show that the local cohomology group
$H^0_{R_+}(H^q)=0$.  For this, consider the two hypercohomology 
spectral sequences of local cohomology, writing $\Omega^\hdot$ in 
place of $\Omega^\hdot_{S/\kk}(\A)$:
\begin{eqnarray*}
\leftidx{'}{E}{_2^{pq}}=H^p(H^q_{R_+}(\Omega^\hdot))&\Rightarrow&
{\mathbb H}^{p+q}_{R_+}(\Omega^\hdot),\quad\text{and}\\
\leftidx{''}{E}{_2^{pq}}=H^p_{R_+}(H^q(\Omega^\hdot))&\Rightarrow&
{\mathbb H}^{p+q}_{R_+}(\Omega^\hdot).
\end{eqnarray*}
The tame hypothesis implies that $H^q_{R_+}(\Omega^p)=0$ for $0\leq
q<\ell-p$.  Then, from the first spectral sequence, we obtain
${\mathbb H}^k_{R_+}(\Omega^\hdot)=0$ for $0\leq k<\ell$.

On the other hand, consider the least $q$ for which 
$H^q=H^q(\Omega^\hdot)\neq0$.  Then if $q<\ell$, 
we must have $\leftidx{''}{E}{_\infty^{0q}}=\leftidx{''}{E}{_2^{0q}}=0$.
So $H^0_{R_+}(H^q)=0$, as required.
\end{proof}
\begin{remark}
The hypothesis that $\A$ is tame was required only to show that the complex
\eqref{eq:logcplx} was exact when localized at $R_+$; other localizations
followed by induction.  Theorem~\ref{th:two} can then be extended slightly
as follows.
\end{remark}
\begin{theorem}
If $\A$ is an essential arrangement for which all proper subarrangements
$\A_X$ are tame, then the complex of coherent sheaves
on $\P^{\ell-1}\times\P^{n-1}$
$$
0\rightarrow \omsheaf^0_{S/\kk}(\A)\rightarrow \omsheaf^1_{S/\kk}(\A)
\rightarrow
\cdots\rightarrow \omsheaf^\ell_{S/\kk}(\A)\rightarrow 
{\mathcal O}_{\P\OSigma}(n-\ell)\rightarrow 0
$$
is exact.
\end{theorem}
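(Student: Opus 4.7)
The plan is to verify exactness stalkwise on $\P^{\ell-1}\times\P^{n-1}$, a condition which reduces to showing that the cohomology modules $H^q(\Omega^\hdot_{S/\kk}(\A),\omega_{\b a})$ are supported on the irrelevant locus $V(R_+)\cup V(k_+)$ for $q<\ell$. I will in fact show the stronger statement that these modules are annihilated by a power of $R_+$; equivalently, that the cohomology vanishes after localization at every prime $\p$ of $S$ with $\p\cap R\neq R_+$, for $q<\ell$.

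Given such a prime $\p$, set $X=X(\p\cap R)$. Since $\p\not\supseteq R_+$, the flat $X$ is non-trivial, so $\A_X$ is a proper subarrangement and is tame by hypothesis. The approach is then to mimic the reduction argument in the proof of Theorem~\ref{th:two} at $\p$. For each $H_n\in\A\setminus\A_X$ the form $f_n$ is a unit at $\p$ (since $X\not\subseteq\ker f_n$), so the change-of-variables isomorphism $\phi$ from that proof produces a cochain isomorphism
\[
(\Omega^\hdot_{S/\kk}(\A),\omega_{\b a})_\p\cong\bigl(\Omega^\hdot_{S'/\kk'}(\A\setminus\{H_n\})\otimes_\C\C[a_n],\omega_{{\b a}'}\bigr)_{\p'}.
\]
As noted in the remark preceding the theorem, the induction in the proof of Theorem~\ref{th:two} only requires exactness at primes different from $R_+$ for the smaller arrangement, so the argument propagates under the weaker hypothesis. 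Iterating the deletion as long as the remaining arrangement remains essential terminates at an essential subarrangement $\A^*\supseteq\A_X$, together with a prime $\p^*$ still satisfying $\p^*\cap R\neq R_+$.

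To finish, I apply Theorem~\ref{th:two} to $\A^*$, provided $\A^*$—or, after irreducible decomposition via Lemma~\ref{lem:decomp}, each of its factors—is tame. Each such factor is either equal to $\A_Y$ for a proper flat $Y\in L(\A)$ (hence tame by hypothesis) or admits a further inductive argument on $|\A|$, noting that the factors of a reducible subarrangement inherit the hypothesis on their own proper subarrangements from the decomposition $\A_Y\cong\A_{Y'}\oplus(\A')_{Y''}$ of any proper flat. Once this terminal tameness is secured, Theorem~\ref{th:two} applied to $\A^*$ gives that $(\Omega^\hdot(\A^*),\omega_{{\b a}^*})_{\p^*}$ has cohomology only in top degree, and flatness of the tensored polynomial ring $\C[a_n:H_n\in\A\setminus\A^*]$ then transports this vanishing back to $\p$. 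The main obstacle is precisely this last step: verifying that the iteration terminates, after possible decomposition, at arrangements within the reach of Theorem~\ref{th:two}, for which one must track tameness carefully through deletion and irreducible decomposition, using that the latter preserves tameness.
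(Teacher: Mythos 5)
Your proposal follows the same route the paper intends: the remark preceding the theorem observes that tameness was needed only for exactness of \eqref{eq:logcplx} at $R_+$, so one shows the cohomology below top degree is supported on the irrelevant locus and hence sheafifies to zero, the localizations at all other primes being handled by the deletion/change-of-variables/decomposition induction from the proof of Theorem~\ref{th:two}, which bottoms out at the localizations $\A_Y$ at proper flats. Your identification of the terminal factors as such proper localizations (tame by hypothesis) is exactly the bookkeeping the paper leaves implicit, so the argument is correct and essentially identical in approach.
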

\subsection{Proof of Theorem~\ref{thm:closure}}\label{ss:pfclosure}
The argument that the variety of the logarithmic ideal $I(\A)$ equals
the closure of $\Sigma(\A)$ is parallel to the proof of Theorem~\ref{th:two},
so we include it here to avoid unnecessary repetition.  We note, however,
that the arrangement $\A$ is not assumed to be tame here.
\begin{proof}
Again, argue by induction on $n$, the number of hyperplanes.
If $n=1$, then
$\OSigma=V(I)=\set{(0,0)}$.  If $n>1$, it suffices to 
consider irreducible arrangements, using the induction hypothesis and
Lemma~\ref{lem:decomp}.  Clearly $\Sigma\subseteq V(I)$.  If 
$(x,\lambda)\in V(I)-\Sigma$, we argue that it has a neighborhood
that intersects $\Sigma$.

First, consider the case where $x=0$.  Since $\A$ is irreducible, by
Proposition~\ref{prop:Ideg0}, $V(I)$ is given in a neighborhood of
$x=0$ by the equation $\sum_{i=1}^n a_i=0$.  Comparing with 
Proposition~\ref{prop:closure1} establishes the claim.

Otherwise, since $\A$ is assumed to be essential, we assume
again that the last hyperplane of $\A$ does not contain the point $x$.  
Let $\A'$ denote the deletion, following the notation of \S\ref{ss:pftwo}.
From \eqref{eq:localiso},
$\phi^*$ gives a homeomorphism between neighborhoods of
$(x,\lambda)\in V(I(\A))$ and $(x,\lambda')\in V(I(\A'))\times\C$,
where $\lambda'_i=\lambda_i+c_if_i(x)\lambda_n/f_n(x)$ for $1\leq i\leq n-1$
and $\lambda'_n=\lambda_n$.  By the induction hypothesis, the neigborhood
of $(x,\lambda')$ meets $\Sigma(\A')\times\C$, which means the 
neighborhood of $(x,\lambda)$ in $V(I)$ meets $\Sigma(\A)$, as required.
\end{proof}

\section{Resonant $1$-forms have high-dimensional zero loci}
\label{sec:res2crit}

The purpose of this section is to establish the following result.
\begin{theorem}\label{thm:res2crit} 
Let $\A$ be a tame
arrangement of $n$ hyperplanes in $V$.  If $\lambda \in \C^n$ is a
vector of weights for which $H^p(A(\A),\omega_\lambda) \neq 0$,
then the codimension of the critical set $\OSigma_\lambda$ 
is at most $p$, provided either $\A$ is free or $p\leq 2$.
\end{theorem}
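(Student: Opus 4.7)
By Proposition~\ref{prop:tamecodim}, under the stated hypotheses the codimension of $\OSigma_\lambda$ equals the smallest integer $q$ for which $H^q(\Omega^\hdot(\A),\omega_\lambda)\neq0$.  Thus the task reduces to showing that the hypothesis $H^p(A(\A),\omega_\lambda)\neq0$ forces $H^q(\Omega^\hdot(\A),\omega_\lambda)\neq0$ for some $q\leq p$; we will in fact aim to take $q=p$.

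The first step is to replace the Orlik--Solomon complex with a computation on log forms by invoking a quasi-isomorphism of Wiens--Yuzvinsky~\cite{WiensYuz}:  for any tame arrangement $\A$, the natural inclusion of cochain complexes
\[
(A(\A),\omega_\lambda)\hookrightarrow (\Omega^\hdot(\A),\nabla_\lambda),\qquad \nabla_\lambda:=\d+\omega_\lambda,
\]
induces an isomorphism on cohomology.  This translates the hypothesis into the equivalent statement $H^p(\Omega^\hdot(\A),\nabla_\lambda)\neq0$, moving the entire problem onto the logarithmic complex.

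The second step is to pass from this twisted cohomology of the log complex to the $\omega_\lambda$-cohomology via a spectral sequence due to Farber~\cite{farber}.  The operators $\d$ and $\omega_\lambda\wedge$ anticommute and individually square to zero on $\Omega^\hdot(\A)$, and both preserve the total-degree decomposition $\Omega^\hdot(\A)=\bigoplus_m\Omega^\hdot(\A)_m$ into finite-dimensional complexes -- a crucial point, since it lets us do the bookkeeping one piece at a time where semicontinuity-type arguments are well behaved.  Farber's construction, applied via the formal deformation $(\Omega^\hdot(\A)[t],\,\omega_\lambda+t\d)$ filtered by powers of $t$, yields a spectral sequence with $E_1$-page isomorphic to $H^\hdot(\Omega^\hdot(\A),\omega_\lambda)$, converging (after specialization $t=1$) to $H^\hdot(\Omega^\hdot(\A),\nabla_\lambda)$.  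Nonvanishing of the abutment in degree $p$ then forces $E_1^{a,b}\neq0$ for some $a+b\leq p$, i.e.\ $H^q(\Omega^\hdot(\A),\omega_\lambda)\neq0$ for some $q\leq p$.

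Combining these two steps with Proposition~\ref{prop:tamecodim} yields the codimension bound.  The principal technical burden lies in the second step:  because $\d$ and $\omega_\lambda\wedge$ both raise form degree by one and sit in the same bidegree -- so that one does not have a standard double complex -- one must introduce the formal parameter $t$ as above and then verify that the $t=1$ specialization of the resulting spectral sequence correctly recovers $H^\hdot(\Omega^\hdot(\A),\nabla_\lambda)$ with the anticipated direction of the dimension estimate.  The reduction to finite-dimensional pieces $\Omega^\hdot(\A)_m$ in each total degree $m$ is what makes this specialization and the associated degree-bookkeeping tractable.
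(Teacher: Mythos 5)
Your overall architecture matches the paper's (pass from $(A(\A),\omega_\lambda)$ to the twisted logarithmic complex, then to $(\Omega^\hdot(\A),\omega_\lambda)$ via Farber's spectral sequence, then invoke Proposition~\ref{prop:tamecodim}), but both of your steps have gaps, and the second is fatal as written. First, Wiens--Yuzvinsky prove $H^\hdot(\Omega^\hdot(\A),\d)\cong A(\A)$ for the \emph{untwisted} differential; they do not assert that $(A(\A),\omega_\lambda)\hookrightarrow(\Omega^\hdot(\A),\d+\omega_\lambda)$ is a quasi-isomorphism. What is actually available (Proposition~\ref{prop:mono}, via Schechtman--Varchenko's comparison with $\Omega^\hdot(*\A)$ for small $t$ plus upper semicontinuity in $t$) is only a monomorphism on cohomology at $t=1$. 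That weaker statement still yields $H^p(\Omega^\hdot(\A),\nabla)\neq0$, so this gap is repairable.

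The second gap is not repairable by the route you describe. Farber's spectral sequence with $E_1$-page $H^\hdot(\Omega^\hdot(\A)_m,\omega_\lambda)$ converges to $H^\hdot(\Omega^\hdot(\A)_m,\omega_\lambda+t\d)$ only for \emph{all but finitely many} $t$, and $t=1$ may be one of the exceptional values; there is no ``$t=1$ specialization'' to verify --- the statement you need is simply not what the spectral sequence provides. Worse, nonvanishing of the abutment at the special value $t=1$ does not imply nonvanishing for generic $t$ (semicontinuity goes the wrong way: cohomology can only jump up at special $t$), so the output of your step one does not feed into the input of your step two. The paper closes exactly this hole with Proposition~\ref{prop:AOqi}: one first runs Farber's spectral sequence in the \emph{opposite} direction, with $E_1$ computed by Wiens--Yuzvinsky to be $A(\A)$ (this is where tameness enters) and $E_2=H^\hdot(A(\A),\omega_\lambda)$, and uses the monomorphism of Proposition~\ref{prop:mono} to force degeneration; this shows $H^\hdot(\Omega^\hdot(\A)_0,\d+t\omega_\lambda)\cong H^\hdot(A(\A),\omega_\lambda)$ for all but finitely many $t$, with the pieces $m\neq0$ acyclic. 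After the rescaling isomorphism $(\Omega^\hdot(\A),\d+t\omega_\lambda)\cong(\Omega^\hdot(\A),\omega_\lambda+\tfrac{1}{t}\d)$, this identifies the \emph{generic} abutment of your spectral sequence with $H^\hdot(A(\A),\omega_\lambda)$ directly from the hypothesis, which is what legitimizes the conclusion $H^p(\Omega^\hdot(\A),\omega_\lambda)\neq0$ at the $E_1$-page. Without this intermediate generic-$t$ identification the argument does not close.
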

Nonzero $\lambda$ for which $H^1(A,\omega_\lambda)\neq0$
have been studied extensively: see \cite{FY07}.  For such $\lambda$,
if $A$ is tame, then we see $\OSigma_\lambda$ is a hypersurface.

Since rank $3$ arrangements are tame~\cite{WiensYuz}, we also find:
\begin{corollary}
If $\A$ has rank $3$ and $\lambda\in\C^n$ is a collection of weights for which
$H^p(A(\A),\omega_\lambda)\neq0$, then the codimension of $\OSigma_\lambda$
is at most $p$.
\end{corollary}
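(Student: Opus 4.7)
The plan is to combine Proposition~\ref{prop:tamecodim} with a comparison between the Orlik-Solomon and logarithmic complexes that factors through the twisted log complex $(\Omega^\hdot(\A),\nabla_\lambda)$, where $\nabla_\lambda=d+\omega_\lambda$. Concretely, I aim to establish the chain
\begin{equation}\label{eq:pfchain}
\dim H^p(A(\A),\omega_\lambda)\;\le\;\dim H^p(\Omega^\hdot(\A),\nabla_\lambda)\;\le\;\dim H^p(\Omega^\hdot(\A),\omega_\lambda)
\end{equation}
for every $p\ge 0$ and every $\lambda\in\C^n$, provided $\A$ is tame. Granted \eqref{eq:pfchain}, the theorem follows immediately: if $H^p(A(\A),\omega_\lambda)\ne 0$, then $H^p(\Omega^\hdot(\A),\omega_\lambda)\ne 0$, so Proposition~\ref{prop:tamecodim} forces the codimension of $\OSigma_\lambda$ to be at most $p$ whenever $\A$ is free or $p\le 2$.

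For the left-hand inequality I would invoke the result of Wiens-Yuzvinsky in a twisted form. The Brieskorn embedding yields a morphism of $\C$-cochain complexes
\[
i\colon(A(\A),\omega_\lambda)\hookrightarrow(\Omega^\hdot(\A),\nabla_\lambda),
\]
well-defined because each generator $df_j/f_j$ of $A(\A)$ is closed, so $\nabla_\lambda$ restricts to wedge with $\omega_\lambda$ on the image of $i$. Under the tame hypothesis, the Wiens-Yuzvinsky theorem asserts that $i$ is a quasi-isomorphism, so the first inequality in \eqref{eq:pfchain} is actually an equality.

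For the right-hand inequality I would invoke Farber's spectral sequence. Since $\omega_\lambda$ is closed, the operators $d$ and wedge-with-$\omega_\lambda$ anti-commute and each squares to zero, so $\nabla_\lambda^2=0$ and the exterior derivative induces a well-defined operator $d_\ast$ on $\omega_\lambda$-cohomology. Combined with the $\tdeg$-decomposition of the log complex into finite-dimensional blocks, this yields, for each $m$, a spectral sequence on $(\Omega^\hdot(\A)_m,\nabla_\lambda)$ with $E_1^{p}\cong H^p(\Omega^\hdot(\A)_m,\omega_\lambda)$ and $d_1=d_\ast$, converging to $H^p(\Omega^\hdot(\A)_m,\nabla_\lambda)$. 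The standard convergence bound gives
\[
\dim H^p(\Omega^\hdot(\A)_m,\nabla_\lambda)\;\le\;\dim H^p(\Omega^\hdot(\A)_m,\omega_\lambda)
\]
for each $m$, and summing over $m$ yields the right-hand inequality in \eqref{eq:pfchain}.

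The main obstacle is the twisted Wiens-Yuzvinsky comparison: for nonresonant $\lambda$ the quasi-isomorphism is classical, but extending it to arbitrary (possibly resonant) $\lambda$ uses the tame hypothesis in an essential way and is the most delicate homological input. The Farber spectral sequence step, by contrast, is routine machinery for anti-commuting square-zero operators on the finite-dimensional $\tdeg$-blocks.
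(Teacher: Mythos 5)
Your overall architecture---deform between the differentials $\omega_\lambda$ and $\d$ on the finite-dimensional $\tdeg$-blocks of $\Omega^\hdot(\A)$, then invoke Proposition~\ref{prop:tamecodim}---is the paper's, but the right-hand inequality in your chain is not justified, and this is a genuine gap. Farber's spectral sequence for $(C,d+t\delta)$ has $E_1=H^\hdot(C,d)$ and converges to $H^\hdot(C,d+t\delta)$ only for \emph{all but finitely many} $t$; at the exceptional values the actual cohomology can have dimension strictly larger than $\dim E_1$ (take $C^0=C^1=\C$ with $d=1$ and $\delta=-1$: then $H^\hdot(C,d)=0$ but $H^\hdot(C,d+\delta)=C$). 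Since nothing guarantees that $t'=1$ is a generic value of the pencil $\omega_\lambda+t'\d$ on $\Omega^\hdot(\A)_m$, the bound $\dim H^p(\Omega^\hdot(\A)_m,\nabla_\lambda)\le\dim H^p(\Omega^\hdot(\A)_m,\omega_\lambda)$ does not follow. The same conflation of $t=1$ with generic $t$ appears in your left-hand step: the ``twisted Wiens--Yuzvinsky'' quasi-isomorphism $(A(\A),t\omega_\lambda)\to(\Omega^\hdot(\A),\nabla_t)$ is only established for all but finitely many $t$ (Proposition~\ref{prop:AOqi}); at $t=1$ one has only a monomorphism (Proposition~\ref{prop:mono}), which happens to suffice for that inequality but not for the strategy as a whole.

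The repair is to cut $\nabla_1$ out of the chain entirely, which is what the paper does. By Proposition~\ref{prop:AOqi} together with the rescaling isomorphism $(\Omega^\hdot(\A),\d+t\omega_\lambda)\cong(\Omega^\hdot(\A),\omega_\lambda+\tfrac{1}{t}\d)$, one has $H^\hdot(\Omega^\hdot(\A)_0,\omega_\lambda+t'\d)\cong H^\hdot(A(\A),\omega_\lambda)$ and $H^\hdot(\Omega^\hdot(\A)_m,\omega_\lambda+t'\d)=0$ for $m\neq0$, for all but finitely many $t'$. Now apply Farber with base differential $\omega_\lambda$ and deformation $t'\d$ at such a \emph{generic} $t'$: the abutment is $H^\hdot(A(\A),\omega_\lambda)$ and the $E_1$-page is $H^\hdot(\Omega^\hdot(\A)_0,\omega_\lambda)$, so $H^p(A(\A),\omega_\lambda)\neq0$ forces $H^p(\Omega^\hdot(\A),\omega_\lambda)\neq0$. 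Finally, your conclusion covers only the provisos ``$\A$ free or $p\le2$,'' whereas the statement at hand concerns rank~$3$ and arbitrary $p$: you must also record that rank-$3$ arrangements are tame \cite{WiensYuz} and invoke the rank-$3$ clause of Proposition~\ref{prop:tamecodim}, which disposes of the remaining case $p=3$.
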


To prove the theorem, we first show that 
resonance in dimension $p$ implies that the
cohomology of the log complex $\Omega^\hdot(\A)$, with differential 
$\nabla=\d+\omega_\la$, is also nontrivial in dimension $p$. 

\begin{proposition} \label{prop:mono} 
For each $t\in\C^*$, the inclusion
$(A(\A),t \omega_\lambda) \to (\Omega^\hdot(\A),\nabla_t)$ induces a
monomorphism $H^\hdot(A(\A),t \omega_\lambda) \to
H^\hdot(\Omega^\hdot(\A),\nabla_t)$. If $H^p(A(\A),\omega_\lambda) \neq 0$, 
then $H^p(\Omega^\hdot(\A),\nabla)\neq 0$. 
\end{proposition}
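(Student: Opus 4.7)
The inclusion $\iota\colon (A(\A),t\omega_\la) \to (\Omega^\hdot(\A),\nabla_t)$
is indeed a chain map: elements of $A(\A)$ are sums of wedge products of
the $1$-forms $\d f_i/f_i$, which are $\d$-closed, so for any
$\alpha\in A(\A)$ one has
\[
\nabla_t\iota(\alpha) = \d\alpha + t\omega_\la\wedge\alpha
= \iota(t\omega_\la\cdot\alpha).
\]
The second statement of the proposition is the $t=1$ case of the
monomorphism claim, so it suffices to prove the first.

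The plan is to combine the Wiens--Yuzvinsky theorem (which is where the
tame hypothesis enters) with a Farber-type spectral sequence. Wiens and
Yuzvinsky show that for any weights $\mu\in\C^n$, the inclusion induces
an isomorphism
$H^\hdot(A(\A),\omega_\mu) \cong H^\hdot(\Omega^\hdot(\A),\omega_\mu)$;
applied with $\mu=t\la$, this identifies the source of the desired
monomorphism with $H^\hdot(\Omega^\hdot(\A),t\omega_\la)$.

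The Farber-type spectral sequence comes from introducing a formal
parameter $s$ and filtering the $\C[s]$-complex
$(\Omega^\hdot(\A)[s], s\d + t\omega_\la)$ by powers of $s$. Since
$\d\omega_\la=0$, the operators $\d$ and $t\omega_\la\wedge$
anticommute, and the associated-graded differential is
$t\omega_\la\wedge$, producing an $E_1$-page
$H^\hdot(\Omega^\hdot(\A),t\omega_\la)[s]$ with $d_1$ induced by $\d$.
Specialization at $s=1$ (via the short exact sequence of complexes
$0\to\Omega^\hdot(\A)[s]\xrightarrow{s-1}\Omega^\hdot(\A)[s]\to\Omega^\hdot(\A)\to 0$)
will recover $H^\hdot(\Omega^\hdot(\A),\nabla_t)$. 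The chain map $\iota$
respects this filtration, and on the sub-complex $A(\A)[s]$ the
differential $\d$ vanishes identically, so the restricted spectral
sequence collapses at $E_1$ to $H^\hdot(A(\A),t\omega_\la)[s]$.

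The final step is to transfer this collapse across the Wiens--Yuzvinsky
identification. Each class in $H^\hdot(A(\A),t\omega_\la)$ admits a
representative in $A(\A)$, on which $\d$ acts as zero; hence under the
map of spectral sequences it lies in $\ker d_r$ for every $r\ge 1$, so
it is a permanent cycle on the $\Omega$-side. A parallel calculation
with the filtration shows such a class is not in the image of any
$d_r$, and thus contributes a nonzero element of
$H^\hdot(\Omega^\hdot(\A),\nabla_t)$. The principal obstacle lies in
this last verification: making the specialization at $s=1$
cohomologically exact and confirming the ``non-boundary'' half of the
permanent-cycle claim will both require careful bookkeeping with the
filtration and the $\C[s]$-module structure of the abutment.
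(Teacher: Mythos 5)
Your proposal has a genuine gap, and it sits at the foundation of the argument. You invoke Wiens--Yuzvinsky as saying that $H^\hdot(A(\A),\omega_\mu)\cong H^\hdot(\Omega^\hdot(\A),\omega_\mu)$ for any weights $\mu$. That is not their theorem: they prove $H^\hdot(\Omega^\hdot(\A),\d)\cong A(\A)$ for tame arrangements, i.e.\ the comparison is for the de Rham differential $\d$, not for multiplication by $\omega_\mu$. The statement you use is false in general: the differential $\omega_\mu\wedge(-)$ is $R$-linear, so $H^\hdot(\Omega^\hdot(\A),\omega_\mu)$ is an $R$-module (in top degree it is essentially $R/I_\mu$, cf.\ \S\ref{sec:koszul}) and is typically infinite-dimensional over $\C$, whereas $H^\hdot(A(\A),\omega_\mu)$ is finite-dimensional. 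So the identification of the source of your would-be monomorphism with the $E_1$-page of your spectral sequence collapses. A second, related gap is the one you flag yourself: Farber's spectral sequence abuts to $H^\hdot(C,d+t\delta)$ only for all but finitely many $t$ (equivalently, the specialization at $s=1$ is exact only when there is no $(s-1)$-torsion), so even after repairing the $E_1$-page you would still have no control at the specific value of $t$ in the statement, and the ``permanent cycle is not a boundary'' step is precisely the content of the proposition, not bookkeeping. Note also that the proposition carries no tameness hypothesis, while your route would impose one.

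The paper's proof is much more direct and avoids spectral sequences entirely. For $t$ sufficiently small, the inclusion $(A(\A),t\omega_\lambda)\hookrightarrow(\Omega^\hdot(*\A),\nabla_t)$ is a quasi-isomorphism by Schechtman--Varchenko (\cite[Theorem 4.6]{SV}); since this inclusion factors through $(\Omega^\hdot(\A),\nabla_t)$, the first map $H^\hdot(A(\A),t\omega_\lambda)\to H^\hdot(\Omega^\hdot(\A),\nabla_t)$ is injective for such $t$. Then $H^\hdot(A(\A),t\omega_\lambda)=H^\hdot(A(\A),\omega_\lambda)$, and the nonvanishing at $t=1$ follows from upper semicontinuity of $\dim H^p(\Omega^\hdot(\A),\nabla_t)$ in $t$ (using the decomposition into finite-dimensional graded pieces). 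If you want to pursue your line, the correct place for the Farber machinery and Wiens--Yuzvinsky is Proposition~\ref{prop:AOqi} and the proof of Theorem~\ref{thm:res2crit}, where the $E_1$-page really is $H^\hdot(\Omega^\hdot(\A)_m,\d)$ and the tame hypothesis earns its keep.
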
 
\begin{proof} For
$t\neq 0$, let $\nabla_t = \d+t \omega_\lambda$.  For $t$ sufficiently
small, the inclusion $(A(\A),t \omega_\lambda) \hookrightarrow
(\Omega^\hdot(*\A),\nabla_t)$ is a quasi-isomorphism, by 
\cite[Theorem 4.6]{SV}.
Consequently, the sequence of inclusions \[ (A(\A),t \omega_\lambda)
\hookrightarrow (\Omega^\hdot(\A),\nabla_t) \hookrightarrow
(\Omega^\hdot(*\A),\nabla_t) \] implies that the map \[ H^\hdot(A(\A),t
\omega_\lambda) \to H^\hdot(\Omega^\hdot(\A),\nabla_t) \] in cohomology is a
monomorphism.  Since $H^\hdot(A(\A),t
\omega_\lambda)=H^\hdot(A(\A),\omega_\lambda)$, if
$H^p(A(\A),\omega_\lambda)\neq 0$, then $H^p(\Omega^\hdot(\A),\nabla_t) \neq
0$.  The results then follows from the upper semicontinuity with respect
to $t$ of $H^p(\Omega^\hdot(\A),\nabla_t)$. \end{proof}

In light of this result, to prove Theorem \ref{thm:res2crit}, it
suffices to show that the nonvanishing of $H^p(\Omega^\hdot(\A),\nabla)$
implies that of $H^p(\Omega^\hdot(\A),\omega_\lambda)$.  For this, we will
use a spectral sequence, following Farber \cite{Farber2001,farber}.

If $C$ is a cochain complex equipped with two differentials $d$ and
$\delta$ satisfying $d \circ \delta+\delta\circ d=0$, then for each $t
\in \C$, $(C,d+t \delta)$ is a cochain complex.  In \cite{farber},
Farber constructs a spectral sequence converging to the cohomology
$H^\hdot(C,d+t\delta)$, with $E_1$ term given by $E_1^{p,q} = H^{p+q}(C,d)$
for all $q \ge 0$, and $d_1:H^{p+q}(C,d) \to H^{p+q+1}(C,d)$ induced by
$\delta$.  For $r$ sufficiently large, the differential $d_r$ vanishes,
and $E^{p,q}_\infty \cong H^{p+q}(C,d+t \delta)$ for all but finitely
many $t\in \C$, see \cite[\S10.8]{farber}.

For each $m \in \Z$, we use this construction to analyze the cohomology
of the complex 
\begin{equation} \label{eq:om} 
(\Omega^\hdot(\A)_m,\nabla_t)=
(\Omega^\hdot(\A)_m,\d+t \omega_\lambda). 
\end{equation}

In many cases, the monomorphism of Proposition~\ref{prop:mono} is
actually an isomorphism.  In \cite{WiensYuz}, Wiens and Yuzvinsky
show that if $\A$ is a tame arrangement (Definition~\ref{def:tame}),
then $H^\hdot(\Omega^\hdot(\A),\d)\cong A(\A)$.
That is, the logarithmic forms compute the cohomology of the complement.

\begin{proposition} \label{prop:AOqi}
Suppose that  $\A$ is a tame arrangement. If $m \neq 0$, then we have
$H^\hdot(\Omega^\hdot(\A)_m,\d+t \omega_\lambda)=0$.  Furthermore, for $m=0$, the
inclusion $(A(\A),t \omega_\lambda) \hookrightarrow (\Omega^\hdot(\A)_0,\d+t
\omega_\lambda)$ induces an isomorphism in cohomology for all but
finitely many $t$. 
\end{proposition}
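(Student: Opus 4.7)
The plan is to apply Farber's spectral sequence to each graded component $\Omega^\hdot(\A)_m$, using the Wiens--Yuzvinsky computation of $H^\hdot(\Omega^\hdot(\A),d)$ to control its $E_1$-page.

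First I would verify that both differentials preserve the $\tdeg$-grading: the exterior derivative is $\tdeg$-preserving (e.g.\ $dx_j$ has the same $\tdeg$ as $x_j$), and $\omega_\lambda\in A^1(\A)\subset\Omega^1(\A)_0$, so left multiplication by $\omega_\lambda$ is also $\tdeg$-preserving. Since $d\omega_\lambda=0$, the operators $d$ and $\omega_\lambda\wedge(\cdot)$ anticommute, so $(\Omega^\hdot(\A)_m,d+t\omega_\lambda)$ is a complex of finite-dimensional $\C$-vector spaces for each $m\in\Z$ and each $t\in\C$.

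Next I invoke the Wiens--Yuzvinsky theorem: for tame $\A$ the inclusion induces an isomorphism $A(\A)\cong H^\hdot(\Omega^\hdot(\A),d)$. Since $A(\A)$ is generated by the $1$-forms $df_i/f_i$, all of $\tdeg=0$, the image lies inside $\Omega^\hdot(\A)_0$, yielding
\[
H^\hdot(\Omega^\hdot(\A)_m,d)=\begin{cases}A(\A)&\text{if }m=0,\\ 0&\text{if }m\neq 0.\end{cases}
\]
Now apply Farber's spectral sequence, as recalled in the text preceding the proposition, to the bidifferential pair $(d,\omega_\lambda\wedge)$ on $\Omega^\hdot(\A)_m$. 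Its $E_1$-page is $H^\hdot(\Omega^\hdot(\A)_m,d)$, and it converges to $H^\hdot(\Omega^\hdot(\A)_m,d+t\omega_\lambda)$ for all but finitely many $t$. For $m\neq 0$ the $E_1$-page vanishes, hence $E_\infty=0$ and the first claim follows. For $m=0$, I would compare with the Farber spectral sequence for the subcomplex $(A(\A),0,\omega_\lambda\wedge)$: its $E_1$-page is $A(\A)$ (since the first differential is zero), its $d_1$ is again multiplication by $\omega_\lambda$, and it abuts to $H^\hdot(A(\A),t\omega_\lambda)$ for all but finitely many $t$. The inclusion $A(\A)\hookrightarrow\Omega^\hdot(\A)_0$ respects both differentials and induces a morphism of the two Farber spectral sequences which is the identity on $E_1$; by the comparison theorem it is then an isomorphism on $E_\infty$ and hence on abutments for all but finitely many $t$.

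The main obstacle will be checking that the inclusion $A(\A)\hookrightarrow\Omega^\hdot(\A)_0$ strictly respects the filtration implicit in Farber's construction so that it indeed induces a morphism of spectral sequences, and then matching the identification of $d_1$ with $\omega_\lambda\wedge(\cdot)$ on both sides through the Wiens--Yuzvinsky isomorphism at $E_1$. A minor secondary point is that the ``for all $t$'' phrasing of the $m\neq 0$ case is most naturally read as ``for all but finitely many $t$,'' which is exactly what Farber's convergence result yields; this suffices for the downstream application in Theorem~\ref{thm:res2crit} by the upper-semicontinuity argument already used in Proposition~\ref{prop:mono}.
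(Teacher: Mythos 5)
Your argument for the $m\neq 0$ case is exactly the paper's: both differentials preserve $\tdeg$, Wiens--Yuzvinsky concentrates $H^\hdot(\Omega^\hdot(\A),\d)\cong A(\A)$ in total degree $0$, so the $E_1$-page of Farber's spectral sequence for $\Omega^\hdot(\A)_m$ vanishes when $m\neq0$. (Your reading of the unqualified ``for all $t$'' as ``for all but finitely many $t$'' is also how the paper uses the statement in the proof of Theorem~\ref{thm:res2crit}.)

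For $m=0$ you diverge from the paper, and the divergence lands precisely on the obstacle you flag. You want to run the comparison theorem on the morphism of Farber spectral sequences induced by $A(\A)\hookrightarrow\Omega^\hdot(\A)_0$ and then pass from $E_\infty$ to the abutments. But Farber's theorem, as cited, gives $E_\infty^{p,q}\cong H^{p+q}(C,d+t\delta)$ for generic $t$ only as an isomorphism of vector spaces; without knowing that this identification is natural in the bidifferential complex, an isomorphism on $E_\infty$ only yields \emph{equality of dimensions} of the two abutments, not that the inclusion itself induces the isomorphism. The paper's proof is structured to avoid this issue: it uses only one spectral sequence (for $\Omega^\hdot(\A)_0$), computes $E_2^{p,q}=H^{p+q}(A(\A),\omega_\lambda)$, notes that $E_\infty$ is a subquotient of $E_2$ (so $\dim E_\infty\le\dim E_2$), and then invokes Proposition~\ref{prop:mono} to get an a priori injection $H^\hdot(A(\A),t\omega_\lambda)\hookrightarrow H^\hdot(\Omega^\hdot(\A)_0,\nabla_t)=E_\infty$ in the other direction; the two inequalities force degeneration at $E_2$ and show the inclusion is an isomorphism. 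You can repair your version the same way: after your comparison argument gives equal dimensions for generic $t$, cite Proposition~\ref{prop:mono} to conclude that the (already injective) map induced by inclusion is onto. Without that step the proof is incomplete.
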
 
\begin{proof} 
By the main theorem of \cite{WiensYuz},
$H^\hdot(\Omega^\hdot(\A),\d)=A(\A)$.
Consequently, in the Farber spectral sequence for the complex
\eqref{eq:om}, we have 
\[ 
E_1^{p,q}=H^{p+q}(\Omega^\hdot(\A)_m,\d)= \begin{cases}
0&\text{for $m\neq 0,$}\\ A^{p+q}(\A)&\text{for $m=0$.} \end{cases} 
\]
The first assertion follows immediately.  For $m=0$, since
multiplication by 
$\omega_\lambda$ induces the differential 
$d_1\colon E_1^{p,q} \to E_1^{p+1,q}$, 
the $E_2$-term of
the spectral sequence is $E_2^{p,q}=H^{p+q}(A(\A),\omega_\lambda)$.  By
Proposition \ref{prop:mono}, the vector space $E_2^{p,q}$ is a subspace
of $E_\infty^{p,q}=H^{p+q}(\Omega^\hdot(\A)_0,\nabla_t)$ for large $p$. The
result follows.  
\end{proof}

\begin{proof}[Proof of Theorem \ref{thm:res2crit}]
It suffices to show that if $H^p(A(\A),\omega_\lambda) \neq 0$  does not
vanish, then 
$H^p(\Omega^\hdot(\A),\omega_\lambda)\neq 0$ as well.

For $t \in \C^*$, the map $\phi\colon
\bigl(\Omega^\hdot(\A),\d+t\omega_\lambda\bigr) \to
\bigl(\Omega^\hdot(\A),\omega_\lambda+\frac{1}{t} \d\bigr)$ defined by 
$\phi(\eta) = \bigl(\frac{1}{t}\bigr)^q \eta$ for $\eta \in
\Omega^q(\A)$ 
is a cochain map, and is an isomorphism.  This fact, together with 
Proposition \ref{prop:mono}, implies that
$H^\hdot(\Omega^\hdot(\A)_m,\omega_\lambda+t'\d)=0$ 
for $m\neq 0$ and that 
$H^\hdot(\Omega^\hdot(\A)_0,\omega_\lambda+t'\d) \cong 
H^\hdot(A(\A),\omega_\lambda)$ for all but finitely many $t'$.

The Farber spectral sequence of the complex 
$\bigl(\Omega^\hdot(\A)_0,\omega_\lambda+t'd\bigr)$ has $E_1$-term 
$H^\hdot(\Omega^\hdot(\A)_0,\omega_\lambda)$, and abuts to 
$H^\hdot(\Omega^\hdot(\A)_0,\omega_\lambda+t'd) 
\cong H^\hdot(A(\A),\omega_\lambda)$ 
for generic $t'$.  Consequently, 
the assumption that $H^p(A(\A),\omega_\lambda)\neq 0$ implies that
$H^p(\Omega^\hdot(\A)_0,\omega_\lambda)\neq 0$ as well.  Hence, 
$H^p(\Omega^\hdot(\A),\omega_\lambda)\neq 0$.  Now use
Proposition~\ref{prop:tamecodim}: if $\A$ is free or $p\leq2$, 
the codimension of $\OSigma_\lambda$ is at most $p$.
\end{proof}

\section{Examples and Counterexamples} \label{sec:examples}


If $\Phi_\la$ is a master function, recall that $\LL_\la$ denotes the
corresponding complex, rank one
local system on the complement $M$ of the underlying arrangement $\A$. 
As noted in the Introduction, for sufficiently generic weights $\la$,
the inclusion of the Orlik-Solomon complex $(A(\A),\omega_\la)$ in the
twisted de Rham complex $(\Omega^\hdot(*\A),\d+\omega_\la)$ induces an
isomorphism 
$H^\hdot(A(\A),\omega_\la) \cong H^\hdot(M;\LL_\la)$.  See \cite{ESV,STV} for
conditions on $\la$ which insure that this isomorphism holds.

In light of this relationship between the Orlik-Solomon cohomology
$H^\hdot(A(\A),\omega_\la)$ and the local system cohomology
$H^\hdot(M;\LL_\la)$, one might expect a correspondence between the
non-vanishing of local system cohomology and the codimension of the
critical set of $\Phi_\la$, analogous to that established in Theorem
\ref{thm:res2crit}.  Such a correspondence does not hold, as the
following family of examples illustrate.

\begin{example} \label{ex:monomial deletions}
Let $r$ be a natural number, and $\alpha, \beta, \gamma$ complex numbers.
The master function
\[
\Phi = x_1^{r\alpha}
x_2^{r\beta}(x_1^r-x_2^r)^\gamma(x_1^r-x_3^r)^\beta(x_2^r-x_3^r)^\alpha
\]
determines a local system $\LL$ on the complement $M$ of the arrangement
$\A$ with defining polynomial $Q(\A)=x_1 x_2
(x_1^r-x_2^r)(x_1^r-x_3^r)(x_2^r-x_3^r)$.  Note that $\A$ has $3r+2$
hyperplanes, and let $\la\in\C^{3r+2}$ denote the collection of weights
corresponding to $\Phi$.  The one-form $\omega_\la=\d \log \Phi$ is given by
$\omega_\la=d_1 \d x_1+d_2 \d x_2+d_3 \d x_3$, where
\[
d_1=\frac{r\alpha}{x_1}+\frac{rx_1^{r-1}\gamma}{x_1^r-x_2^r}+\frac{rx_1^{r-1}
\beta}{x_1^r-x_3^r},\ 
d_2=\frac{r\beta}{x_2}+\frac{rx_2^{r-1}\gamma}{x_2^r-x_1^r}+\frac{rx_2^{r-1}
\alpha}{x_2^r-x_3^r},\ 
d_3=\frac{rx_3^{r-1}\alpha}{x_3^r-x_2^r}+\frac{rx_3^{r-1}\beta}{x_3^r-x_1^r},
\]
and the critical set $\Sigma_\la=V(\omega_\la)$ by $\Sigma_\la=V(d_1,d_2,d_3) 
\subseteq M$.

The arrangement $\A$ is supersolvable, hence free.  The module $\der(\A)$ has 
basis 
\[
\begin{aligned}
D_1&=x_1 \dd{}{x_1}+x_2 \dd{}{x_2}+x_3 \dd{}{x_3},\qquad
D_2=x_1^{r+1} \dd{}{x_1}+x_2^{r+1} \dd{}{x_3}+x_3^{r+1} \dd{}{x_3},\\
D_3&=x_1x_2(x_1x_2x_3)^{r-1}\left(x_1^{1-r} \dd{}{x_1}+x_2^{1-r} 
\dd{}{x_3}+x_3^{1-r} \dd{}{x_3}\right),
\end{aligned}
\]
see \cite[Prop. 6.85]{ot}. Consequently, the ideal $I_\la$ is generated by 
$d_i'=\langle D_i,\omega_\la\rangle$, $1\le i\le 3$, where
\[
d_1'=r(2\alpha+2\beta+\gamma),\ 
d_2'=r(\alpha+\beta+\gamma)(x_1^r+x_2^r)+r(\alpha+\beta)x_3^r,\ 
d_3'=r(\beta x_1^r+\alpha x_2^r)x_3^{r-1},
\]
and $\OSigma_\la=V(I_\la)=V(d_1',d_2',d_3') \subseteq\C^3$.  Observe that if 
$2\alpha+2\beta+\gamma \neq 0$, then $\OSigma_\la=\emptyset$ is empty, and 
hence $\Sigma_\la=\OSigma_\la\cap M=\emptyset$ is empty as well.

Let $q$ be a natural number with $1\le q \le r-1$, and assume that $\alpha,
\beta, \gamma$ satisfy $\alpha+\beta+\gamma \in \Z$ and $\gamma=-q/r$.  In 
this instance, 
it is known that the first local system cohomology
group is non-zero, $H^1(M;\LL_\la)\neq 0$, while the first Orlik-Solomon
cohomology group vanishes, $H^1(A(\A),\omega_\la)=0$, see
\cite{triples,Suciu}.  However, for such $\alpha,\beta, \gamma$, one has 
$2\alpha+2\beta+\gamma \neq 0$, so $\Sigma_\la=\emptyset$ and 
$\OSigma_\la=\emptyset$ as noted above.

Other choices of $\alpha,\beta, \gamma$ may be used to illustrate that the 
variety $\OSigma_\la$ is not, in general, the closure of $\Sigma_\la$, in 
contrast to the result of
Theorem~\ref{thm:closure} for the variety $\Sigma$.  This is the case, for
 example, if $\alpha+\beta=0$ and $\gamma=0$.  Here, 
$\OSigma_\la=V((x_1^r-x_2^r)x_3)$, while $\Sigma_\la=V(x_3)$.
\end{example}

The last example above may also be used to show that a converse of Theorem 
\ref{thm:res2crit} cannot hold.  That is, a master function with 
positive-dimensional critical set need not, in general, correspond to 
weights which are resonant in the corresponding dimension.

\begin{example} \label{ex:further deletion}
Let $r$ be a natural number, and $\alpha, \beta$ complex numbers.
The master function
\[
\Phi = x_1^{r\alpha}
x_2^{r\beta}(x_1^r-x_3^r)^\beta(x_2^r-x_3^r)^\alpha
\]
determines a local system $\LL$ on the complement $M$ of the arrangement
$\A$ with defining polynomial $Q(\A)=x_1 x_2
(x_1^r-x_3^r)(x_2^r-x_3^r)$.  Note that $\A$ has $2r+2$
hyperplanes, and let $\la\in\C^{2r+2}$ denote the collection of weights
corresponding to $\Phi$. The arrangement $\A\subset\C^3$ is not free, but is 
tame.

The one-form $\omega_\la=\d \log \Phi$ is given by
$\omega_\la=d_1 \d x_1+d_2 \d x_2+d_3 \d x_3$, where
\[
d_1=\frac{r\alpha}{x_1}+\frac{rx_1^{r-1}\beta}{x_1^r-x_3^r},\ 
d_2=\frac{r\beta}{x_2}+\frac{rx_2^{r-1}\alpha}{x_2^r-x_3^r},\ 
d_3=\frac{rx_3^{r-1}\alpha}{x_3^r-x_2^r}+\frac{rx_3^{r-1}\beta}{x_3^r-x_1^r},
\]
and the critical set $\Sigma_\la=V(\omega_\la)$ by $\Sigma_\la=V(d_1,d_2,d_3) 
\subseteq M$.
If $\alpha+\beta=0$, it is readily checked that $\Sigma_\la=V(x_3) \subset M$ 
is one-dimensional.
However, if $\alpha\neq 0$, then $H^1(A(\A),\omega_\la)=0$, and if the local 
system $\LL_\la$ corresponding to $\la$ is nontrivial, then $H^1(M;\LL_\la)=0$.
\end{example}

\begin{example}\label{ex:ER}
Consider the arrangement in $\P^3$ given by the nine linear forms
$x_1$, $x_2$, $x_3$, $x_i+x_4$ for $1\leq i\leq 3$, and $x_i+x_j+x_4$, 
for $1\leq i<j\leq 3$.  A computation with Macaulay~2~\cite{M2} shows
that $S/I$ is not Cohen-Macaulay: the projective dimension of $S/I$ is
$5$, while the codimension is $4$.  It follows that the arrangement is
not tame, which can also be verified directly.
Accordingly, the ideal $I$ has an embedded prime $(x_1,x_2,x_3,x_4)$, so 
we see that Corollary~\ref{cor:Iprime} 
requires the hypothesis that $\A$ is tame.

On the other hand, 
further calculation shows that the complex \eqref{eq:logcplx}
is exact for this arrangement, in contrast to Example~5.6 of \cite{ot95b}.
It would be interesting to know, then, if Theorem~\ref{th:two} holds 
without hypothesis.  For this example, the logarithmic comparison
isomorphism $H^\hdot(\Omega^\hdot(\A),\d)\cong A(\A)$ holds, since the rank
is $4$, by \cite[Corollary~6.3]{WiensYuz}.  However, we also do not
know if this isomorphism holds in general.
\end{example}
\begin{ack}
The second author would like to thank Mathias Schulze for pointing out
an error in the previous version of the proof of Theorem~\ref{thm:closure}.
\end{ack}


\newcommand{\arxiv}[1]
{\texttt{\href{http://arxiv.org/abs/#1}{arxiv:#1}}}

\renewcommand{\MR}[1]
{\href{http://www.ams.org/mathscinet-getitem?mr=#1}{MR#1}}

\end{document}